\documentclass{amsart}

\usepackage{amsmath,amsthm,amssymb}
\usepackage{graphicx}
\usepackage{tikz-cd}
\usepackage{paralist}
\usepackage{environ}
\usepackage{xcolor}
\usepackage{hyperref}
\usepackage{centernot}
\usepackage{mathtools}

\usepackage{graphics}

\usepackage{xcolor}
\usepackage{color}
\usepackage{graphicx}
\usepackage{marvosym}

%

\usepackage{paralist}
\usepackage{multicol}

\usepackage{pstricks,pst-text,pst-grad,pst-node,pst-3dplot,pstricks-add,pst-poly,pst-coil} 
\usepackage{pst-fun,pst-blur} 

\usepackage{hyperref}


\newtheorem{theorem}{Theorem}
\theoremstyle{plain}

\newtheorem{corollary}{Corollary}

\newtheorem{definition}{Definition}
\newtheorem{example}{Example}

\newtheorem{notation}{Notation}

\newtheorem{proposition}{Proposition}
\newtheorem{remark}{Remark}



\begin{document}

\title[Cross Ratio Advances]{Cross Ratio Geometry\\ Advances for Four Co-Linear Points in the\\ Desargues Affine Plane-Skew Field}

\author[Orgest ZAKA]{Orgest ZAKA}
\address{Orgest ZAKA: Department of Mathematics-Informatics, Faculty of Economy and Agribusiness, Agricultural University of Tirana, Tirana, Albania}
\email{ozaka@ubt.edu.al, gertizaka@yahoo.com, ozaka@risat.org}

\author[James F. Peters]{James F. Peters}
\address{James F. PETERS: Department of Electrical \& Computer Engineering, University of Manitoba, WPG, MB, R3T 5V6, Canada and Department of Mathematics, Faculty of Arts and Sciences, Ad\.{i}yaman University, 02040 Ad\.{i}yaman, Turkey}
\thanks{The research has been supported by the Natural Sciences \& Engineering Research Council of Canada (NSERC) discovery grant 185986, Instituto Nazionale di Alta Matematica (INdAM) Francesco Severi, Gruppo Nazionale  per le Strutture Algebriche, Geometriche e Loro Applicazioni grant 9 920160 000362, n.prot U 2016/000036 and Scientific and Technological Research Council of Turkey (T\"{U}B\.{I}TAK) Scientific Human Resources Development (BIDEB) under grant no: 2221-1059B211301223.}
\email{James.Peters3@umanitoba.ca}

\dedicatory{Dedicated to Girard Desargues and Karl G. C. von Staudt}

\subjclass[2010]{51-XX; 51Axx; 51A30; 51E15, 51N25, 30C20, 30F40}

\begin{abstract}
This paper introduces advances in the geometry of the cross ratio of four co-linear points in in the Desargues affine plane. The cross-ratio of co-linear points of a skew field in the Desargues affine plane. The results given here have a clean rendition, based on Desargues affine plane axiomatics, skew field properties and the addition and multiplication of planar co-linear points. 
\end{abstract}

\keywords{Co-Linear Points, Cross Ratio, Skew Field, Desargues Affine Plane}

\maketitle
\tableofcontents

\section{Introduction and Preliminaries}

In the advancement of our research in the connections of axiomatic geometry and algebraic structures, we have achieved some results which we have presented in this paper. More recently, results are given about the association of algebraic structures in affine planes and in Desargues affine plane, and vice versa in \cite{ZakaDilauto, ZakaFilipi2016, FilipiZakaJusufi, ZakaCollineations, ZakaVertex, ZakaThesisPhd, ZakaPetersIso, ZakaPetersOrder, ZakaMohammedSF, ZakaMohammedEndo, ZakaPeters2022DyckFreeGroup}. The foundations for the study of the connections between axiomatic geometry and algebraic structures were set forth by D. Hilbert \cite{Hilbert1959geometry}. And some classic research results in this context are given, for example, by  E. Artin \cite{Artin1957GeometricAlgebra}, D.R. Huges and F.C. Piper ~\cite{HugesPiper}, H. S. M Coxeter ~\cite{CoxterIG1969}.  Marcel Berger in \cite{Berger2009geometry12}, Robin Hartshorne in \cite{Hartshorne1967Foundations}. 

In this paper, we advance in study regarding  the cross ratio of 4-points, in a line of the Desargues affine plane. We study and discuses the properties and results related to the cross ratio for four points, also we see the points of line as a elements of a skew field which constructed over this line on Desargues affine plane.

We use skew field properties for the proof of our results, since the cross-ratio sketch is very confusing (even with the Euclidean interpretation).

Earlier, we study the ratio of 2 and 3 points in a line on Desargues affine plane (see \cite{ZakaPeters2022DyckFreeGroup}, \cite{ZakaPeters2022InvariantPreserving}, \cite{ZakaPeters2022InvariantPreserving}), also we have shown that on each line on Desargues affine plane, we can construct a skew-field simply and constructively, using simple elements of elementary geometry, and only the basic axioms of Desargues affine plane (see \cite{ZakaFilipi2016}, \cite{FilipiZakaJusufi}, \cite{ZakaThesisPhd}, \cite{ZakaPetersIso} ). 

In this paper, we utilize a method that is naive and direct, without requiring the concept of coordinates. We bases only in Desargues affine plane axiomatic and in skew field properties (the points in a line on Desargues affine plane, we think of them as elements of skew fields, which is a construct over this line). 


\subsection{Desargues Affine Plane}
Let $\mathcal{P}$ be a nonempty space, $\mathcal{L}$ a nonempty subset of $\mathcal{P}$. The elements $p$ of $\mathcal{P}$ are points and an element $\ell$ of $\mathcal{L}$ is a line. 

\begin{definition}
The incidence structure $\mathcal{A}=(\mathcal{P}, \mathcal{L},\mathcal{I})$, called affine plane, where satisfies the above axioms:\\

\begin{compactenum}[1$^o$]
\item For each points $\left\{P,Q\right\}\in \mathcal{P}$, there is exactly one line $\ell\in \mathcal{L}$ such that $\left\{P,Q\right\}\in \ell$.

\item For each point $P\in \mathcal{P}, \ell\in \mathcal{L}, P \not\in \ell$, there is exactly one line $\ell'\in \mathcal{L}$ such that
$P\in \ell'$ and $\ell\cap \ell' = \emptyset$\ (Playfair Parallel Axiom~\cite{Pickert1973PlayfairAxiom}).   Put another way,
if the point $P\not\in \ell$, then there is a unique line $\ell'$ on $P$ missing $\ell$~\cite{Prazmowska2004DemoMathDesparguesAxiom}.

\item There is a 3-subset of points $\left\{P,Q,R\right\}\in \mathcal{P}$, which is not a subset of any $\ell$ in the plane.   Put another way,
there exist three non-collinear points $\mathcal{P}$~\cite{Prazmowska2004DemoMathDesparguesAxiom}.
\end{compactenum}
\end{definition}

\emph{\bf Desargues' Axiom, circa 1630}~\cite[\S 3.9, pp. 60-61] {Kryftis2015thesis}~\cite{Szmielew1981DesarguesAxiom}.   Let $A,B,C,A',B',C'\in \mathcal{P}$ and let pairwise distinct lines  $\ell^{AA_1} , \ell^{BB'}, \ell^{CC'}, \ell^{AC}, \ell^{A'C'}\in \mathcal{L}$ such that
\begin{align*}
\ell^{AA_1} \parallel \ell^{BB'} \parallel \ell^{CC'} \ \mbox{(Fig.~\ref{fig:DesarguesAxiom}(a))} &\ \mbox{\textbf{or}}\
\ell^{AA_1} \cap \ell^{BB'} \cap \ell^{CC'}=P.
 \mbox{(Fig.~\ref{fig:DesarguesAxiom}(b) )}\\
 \mbox{and}\  \ell^{AB}\parallel \ell^{A'B'}\ &\ \mbox{and}\ \ell^{BC}\parallel \ell^{B'C'}.\\
A,B\in \ell^{AB}, A'B'\in \ell^{A'B'},  &\ \mbox{and}\ B,C\in \ell^{BC},  B'C'\in \ell^{B'C'}.\\
A\neq C, A'\neq C', &\ \mbox{and}\ \ell^{AB}\neq \ell^{A'B'}, \ell^{BC}\neq \ell^{B'C'}.
\end{align*}

\begin{figure}[htbp]
	\centering
		\includegraphics[width=0.85\textwidth]{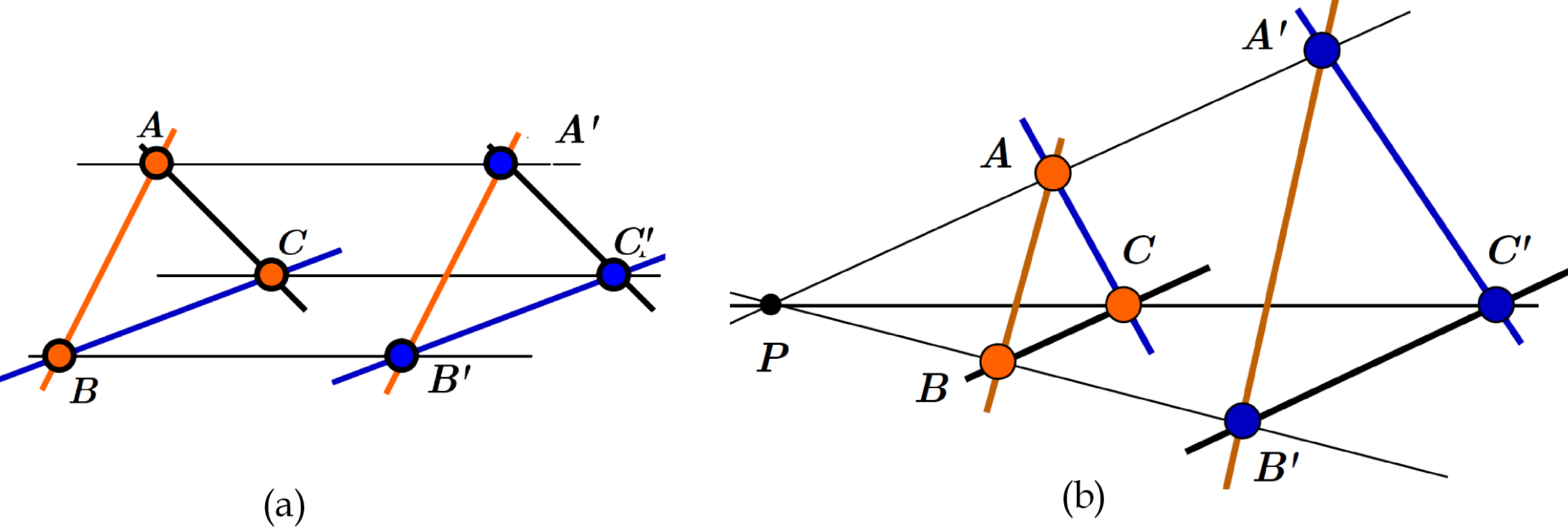}
	\caption{Desargues Axioms: (a) For parallel lines $\ell^{AA_1} \parallel \ell^{BB'} \parallel \ell^{CC'}$; (b) For lines which are cutting in a single point $P$,  $\ell^{AA_1} \cap \ell^{BB'} \cap \ell^{CC'}=P$.}
		\label{fig:DesarguesAxiom}
\end{figure}

Then $\boldsymbol{\ell^{AC}\parallel \ell^{A'C'}}$.   \qquad \textcolor{blue}{$\blacksquare$}

\begin{example}
In Euclidean plane, three vertexes $ABC$ and $A'B'C'$, are similar (in (a) are equivalent-triangle and in (b) are homothetical-triangle) the parallel lines,  $\ell^{AC}, \ell^{A'C'}\in \mathcal{L}$ in Desargues' Axiom are represented in Fig.~\ref{fig:DesarguesAxiom}.  In other words, the side $AC$ of the triangle of $\bigtriangleup ABC$ is parallel with the side $A'C'$ of the triangle $\bigtriangleup A'B'C'$, provided the restrictions on the points and lines in Desargues' Axiom are satisfied.
\qquad \textcolor{blue}{$\blacksquare$}
\end{example}


\noindent A {\bf Desargues affine plane} is an affine plane that satisfies Desargues' Axiom. 
\begin{notation}
Three vertexes $ABC$ and $A'B'C'$, which, fulfilling the conditions of the Desargues Axiom, we call \emph{'Desarguesian'}.
\end{notation}

\subsection{Addition and Multiplication of points in a line of Desargues affine plane} $ $\\

\textbf{Addition of points in a line of affine plane:} In an Desargues affine plane $\mathcal{A_D}=(\mathcal{P},\mathcal{L},\mathcal{I})$ we fix two different points $O,I\in \mathcal{P},$ which, according to Axiom 1, determine a line $\ell^{OI}\in \mathcal{L}.$ Let $A$ and $B$ be two arbitrary points of a line $\ell^{OI}$. In plane $\mathcal{A_D}$ we choose a point $B_{1}$ not incident with $\ell^{OI}$: $B_{1}\notin \ell^{OI}$ (we call the auxiliary point). Construct line $\ell_{OI}^{B_{1}},$ which is only according to the Axiom 2. Then construct line $\ell_{OB_{1}}^{A},$ which also is the only according to the Axiom 2. Marking their intersection $P_{1}=\ell_{OI}^{B_{1}}\cap \ell_{OB_{1}}^{A}.$ Finally construct line $\ell_{BB_{1}}^{P_{1}}.$ For as much as $\ell^{BB_{1}}$ cuts the line $\ell^{OI}$ in point $B$, then this line, parallel with $\ell^{BB_{1}}$, cuts the line $\ell^{OI}$ in a single point $C$, this point we called the addition of points $A$ with point $B$ (Figure \ref{fig:FigureAdMult} (a)).

\textbf{Multiplication of points in a line in affine plane}.
Choose in the plane $\mathcal{A_D}$ one point $B_{1}$ not incident with lines $\ell^{OI},$ and construct the line $\ell^{IB_{1}}$. Construct the line $\ell_{IB_{1}}^{A},$ which is the only accoding to the Axiom 2 and cutting the line $\ell^{OB_{1}}$. Marking their intersection with $P_{1}=\ell
_{IB_{1}}^{A}\cap OB_{1}.$ Finally, construct the line $\ell
_{BB_{1}}^{P_{1}}.$ Since $\ell^{BB_{1}}$ cuts the line $\ell^{OI}$ in a single point $B$, then this line, parallel with $\ell^{BB_{1}}$, cuts the line $\ell^{OI}$ in one single point $C$, this point we called the multiplication of points $A$ with point $B$ (Figure \ref{fig:FigureAdMult} (b)).

The process of construct the points $C$ for adition and multiplication of points in $\ell^{OI}-$line in affine plane, is presented in the tow algorithm form  

\begin{multicols}{2}
\textsc{Addition Algorithm}
\begin{description}
	\item[Step.1] $B_{1}\notin \ell^{OI}$
	\item[Step.2] $\ell_{OI}^{B_{1}}\cap \ell_{OB_{1}}^{A}=P_{1}$
	\item[Step.3] $\ell_{BB_{1}}^{P_{1}}\cap \ell^{OI}=C(=A+B)$
\end{description}

\textsc{Multiplication Algorithm}
\begin{description}
	\item[Step.1] $B_{1}\notin \ell^{OI}$
	\item[Step.2] $\ell_{IB_{1}}^{A}\cap \ell^{OB_{1}}=P_{1}$
	\item[Step.3] $\ell_{BB_{1}}^{P_{1}}\cap \ell^{OI}=C(=A\cdot B)$
\end{description}
\end{multicols}

\begin{figure}[htbp]
\centering%
\includegraphics[width=0.92\textwidth]{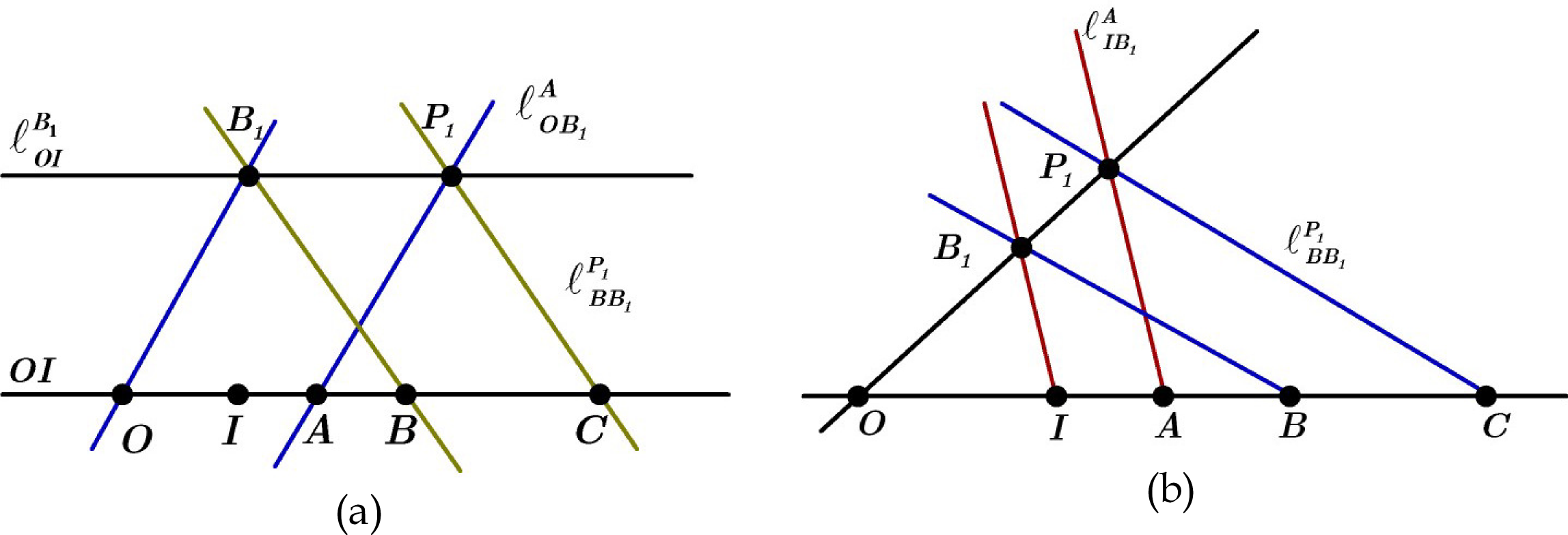}
\caption{ (a) Addition of points in a line in affine plane, 
(b) Multiplication of points in a line in affine plane}
\label{fig:FigureAdMult}
\end{figure}

In \cite{ZakaThesisPhd} and \cite{FilipiZakaJusufi}, we have prove that $(\ell^{OI}, +, \cdot)$ is a skew field in Desargues affine plane, and is field (commutative skew field) in the Papus affine plane.

\subsection{Some algebraic properties of Skew Fields}

I n this section $K$ will denote a skew field~\cite{Herstein1968NR} and $z[K]$ its center, where is the set $K$ such that
\[
z[K]=\left\{k \in K \quad |\quad ak=ka, \quad \forall a \in K \right\}
\]
\begin{proposition}
$z[K]$ is a commutative subfield of a skew field $K$.
\end{proposition}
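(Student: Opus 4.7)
The plan is to verify the three subfield axioms (closure under the ring operations, presence of $0$ and $1$, and closure under inverses of nonzero elements) for $z[K]$, and then observe that commutativity comes for free from the defining property of the center. Throughout, the argument will lean only on the skew-field axioms of $K$ (associativity, distributivity, and existence of additive and multiplicative inverses).

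First I would check that $0,1\in z[K]$: this is immediate since $0\cdot k = k\cdot 0 = 0$ and $1\cdot k = k\cdot 1 = k$ for every $k\in K$. Next, for closure under addition and additive inverses, take $a,b\in z[K]$ and any $k\in K$; then using distributivity and the defining property of the center,
\[
(a+b)k = ak+bk = ka+kb = k(a+b),
\]
so $a+b\in z[K]$, and similarly $-a\in z[K]$ from $(-a)k = -(ak) = -(ka) = k(-a)$. For closure under multiplication, for $a,b\in z[K]$ and any $k\in K$, associativity gives
\[
(ab)k = a(bk) = a(kb) = (ak)b = (ka)b = k(ab),
\]
so $ab\in z[K]$.

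The only slightly delicate step is closure under multiplicative inverses; this I expect to be the main (mild) obstacle. Let $a\in z[K]$ with $a\neq 0$, so that $a^{-1}$ exists in $K$. For arbitrary $k\in K$, start from $ak = ka$ and multiply on the left and on the right by $a^{-1}$; associativity yields
\[
a^{-1}(ak)a^{-1} = a^{-1}(ka)a^{-1},
\]
which simplifies to $ka^{-1}=a^{-1}k$. Hence $a^{-1}\in z[K]$, completing the check that $z[K]$ is a subfield of $K$.

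Finally, commutativity of the multiplication on $z[K]$ is automatic: if $a,b\in z[K]$, then by definition $b$ commutes with every element of $K$, in particular with $a$, so $ab=ba$. Putting the pieces together gives that $z[K]$ is a commutative subfield of $K$. The whole argument is formal and requires no case distinction; the only place where I would be careful is making sure the manipulation $a^{-1}k = ka^{-1}$ is written as a consequence of $ak=ka$ rather than assumed.
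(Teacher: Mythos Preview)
Your argument is correct and is the standard verification that the center of a skew field is a commutative subfield. Note, however, that the paper itself states this proposition without proof (it is quoted as a known fact from ring theory and is not followed by a \texttt{\textbackslash proof} block), so there is no authorial argument to compare against; your write-up simply supplies the routine details the paper omits.
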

%
Let now $p\in K$ be a fixed element of the skew field $K$. We will denote $z_K(p)$ the centralizer in $K$ of the element $p$, where is the set,
\[
z_K(p)=\left\{k \in K | pk=kp, \right\}.
\]
$z_K(p)$ is sub skew field of K, but, in general, it is not commutative.

Let $K$ be a skew field, $p\in K$, and let us denote by $[p_K]$ the conjugacy class of $p$:
\[
[p_K]= \left\{q^{-1}pq \quad|\quad q \in K \setminus \{0\} \right\}
\]
If, $p\in z[K]$, for all $q \in K$ we have that $q^{-1}pq=p.$

\subsection{Ratio of two and three points}
In the paper \cite{ZakaPeters2022DyckFreeGroup}, we have done a detailed study, related to the ratio of two and three points in a line of Desargues affine plane. Below we are listing some of the results for ratio of two and three points.
  
\begin{definition} \label{ratio2points}
\cite{ZakaPeters2022DyckFreeGroup} Lets have two different points $A,B \in \ell^{OI}-$line, and $B\neq O$, in Desargues affine plane. We define as ratio of this tow points, a point $R\in \ell^{OI}$, such that,
\[R=B^{-1}A, \qquad \text{
we mark this, with,} \qquad 
R=r(A:B)=B^{-1}A
\]
\end{definition}

For a 'ratio-point' $R \in \ell^{OI}$, and for point $B\neq O$ in line $\ell^{OI}$, is a unique defined point, $A \in \ell^{OI}$, such that $R=B^{-1}A=r(A:B)$.
 
\begin{figure}[htbp]
	\centering
		\includegraphics[width=0.75\textwidth]{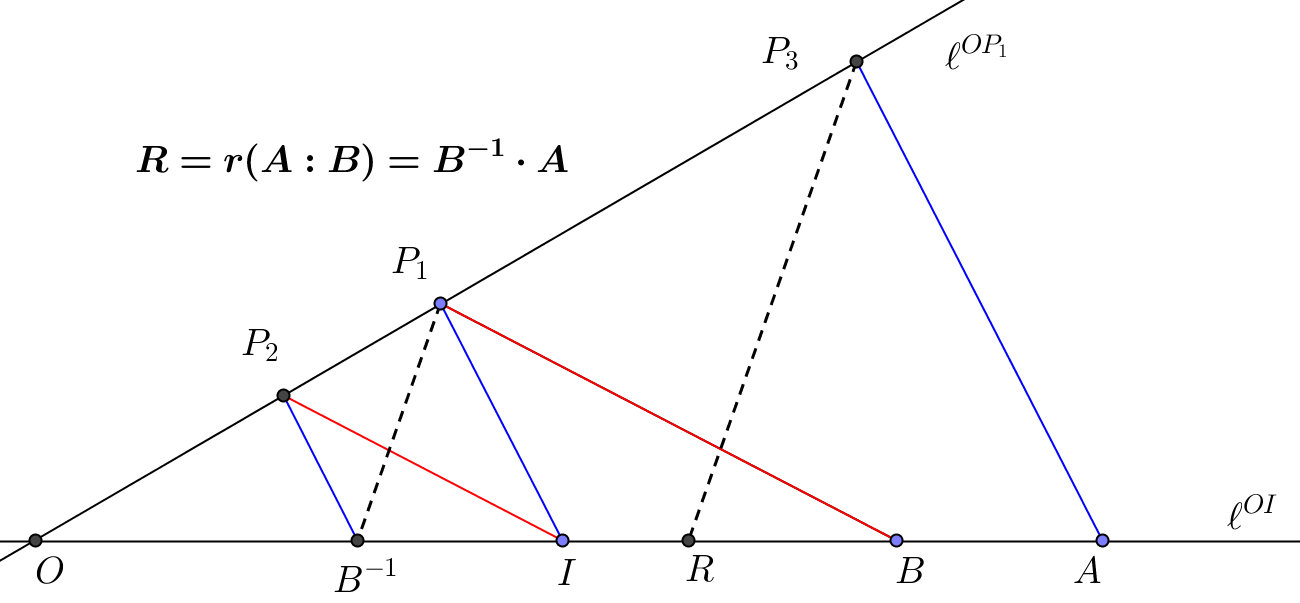}
	\caption{Ilustrate the Ratio-Point, of 2-Points in a line of Desargues affine plane $R=r(A:B)=B^{-1}A$.}
	\label{Ratio2points}
\end{figure}

\textbf{Some results for Ratio of 2-points in Desargues affine plane} (see \cite{ZakaPeters2022DyckFreeGroup}).
\begin{itemize}
\item If have two different points $A,B \in \ell^{OI}-$line, and $B\neq O$, in Desargues affine plane, then, $
r^{-1}(A:B)=r(B:A)$. 
%
\item For three collinear point $A,B,C$ and $C\neq O$, in $\ell^{OI}-$line, have, 
\[
r(A+B:C)=r(A:C)+r(B:C).
\]
\item For three collinear point $A,B,C$ and $C\neq O$, in $\ell^{OI}-$line, have,
\begin{enumerate}
	\item $r(A\cdot B:C)=r(A:C)\cdot B.$
	\item $r(A:B\cdot C)=C^{-1}r(A:C).$
\end{enumerate}
%
\item Let's have the points $A,B \in \ell^{OI}-$line where $B\neq O$.  Then have that, 
\[
r(A:B)=r(B:A) \Leftrightarrow A=B.
\]
%
\item This ratio-map, $r_{B}: \ell^{OI} \to \ell^{OI}$ is a bijection in $\ell^{OI}-$line in Desargues affine plane. 
%
\item The ratio-maps-set $\mathcal{R}_2=\{r_{B}(X)|\forall X\in \ell^{OI} \}$, for a fixed point $B$ in $\ell^{OI}-$line, forms a skew-field with 'addition and multiplication' of points. 
This, skew field $(\mathcal{R}_2, +, \cdot)$ is sub-skew field of the skew field $(\ell^{OI}, +, \cdot)$.
\end{itemize}
\textbf{Ratio of three points in a line on Desargues affine plane.} (see \cite{ZakaPeters2022DyckFreeGroup})
\begin{definition}\label{ratiodef}
If $A, B, C$ are three points on a line $\ell^{OI}$ (collinear) in Desargues affine plane, then we define their \textbf{ratio} to be a point $R \in \ell^{OI}$, such that:
\[
(B-C)\cdot R=A-C, \quad \mbox{concisely}\quad R=(B-C)^{-1}(A-C),
\]
and we mark this with  $r(A,B;C)= (B-C)^{-1}(A-C)$.
\end{definition}

\begin{figure}[htbp]
	\centering
		\includegraphics[width=0.9\textwidth]{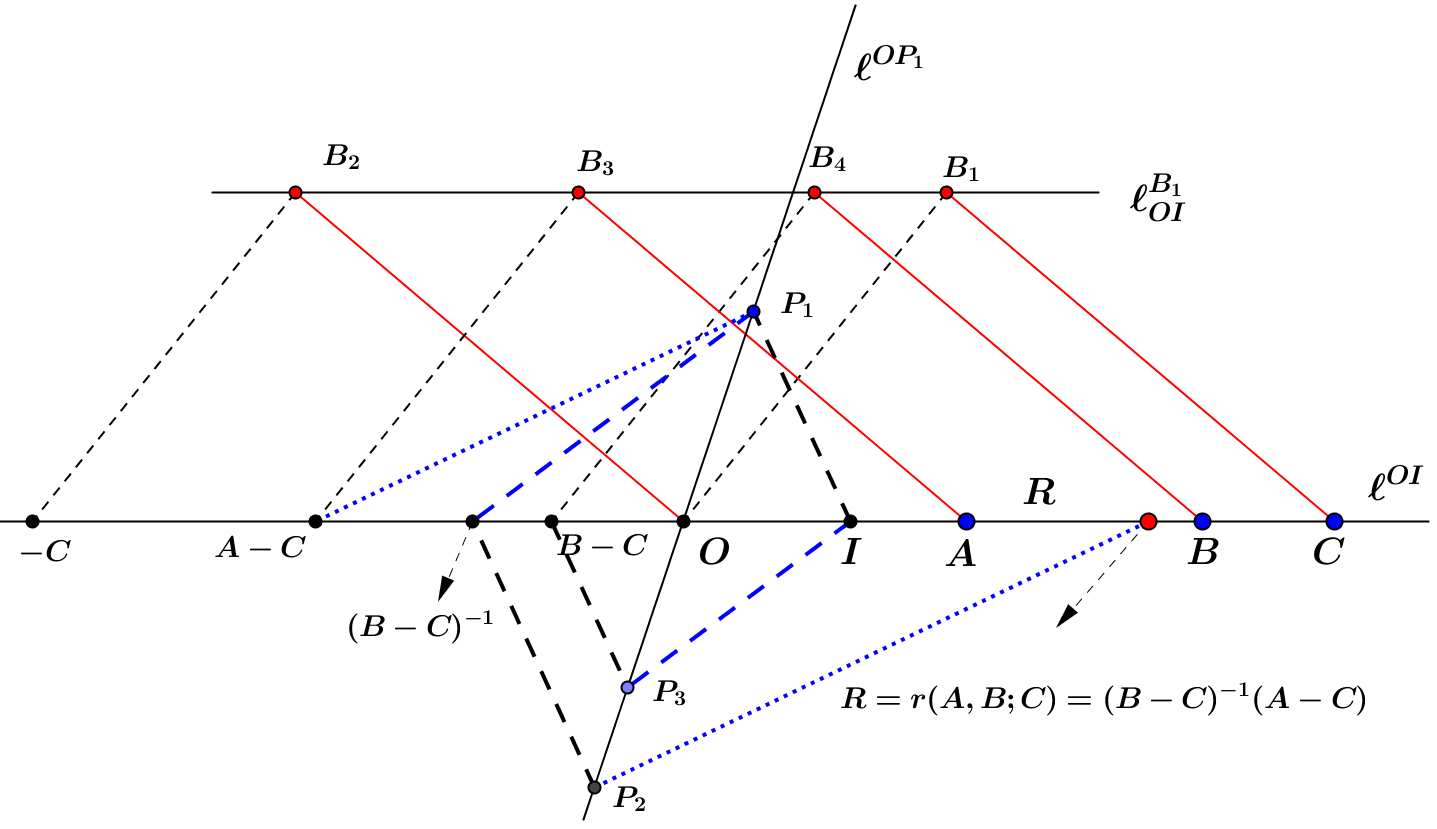}
	\caption{Ratio of 3-Points in a line of Desargues affine plane $R=r(A,B;C)$.}
		\label{ratio3points1}
\end{figure}

\textbf{Some Results for Ratio of 3-points in Desargues affine plane} (\cite{ZakaPeters2022DyckFreeGroup}).
\begin{itemize}
\item \label{reverse.ratio} For 3-points $A,B,C$ in a line $\ell^{OI}$ of Desargues affine plane, we have that,
\[
r(-A,-B;-C)=r(A,B;C).
\]
%
\item \label{inversratio} For 3-points $A,B,C$ in a line $\ell^{OI}$ in the Desargues affine plane, have
\[r^{-1}(A,B;C)=r(B,A;C).
\]
\item If  $A,B,C$, are three different points, and different from point $O$, in a line $\ell^{OI}$ on Desargues affine plane, then
\[r(A^{-1},B^{-1};C^{-1})=B[r(A,B;C)]A^{-1}.\]
%
\item In the Pappus affine plane, for three point different from point $O$, in $\ell^{OI}-$line, we have
$r(A^{-1},B^{-1};C^{-1})=r(A,B;C) \cdot r(B,A;O).$
%
%
\item This ratio-map, $r_{BC}: \ell^{OI} \to \ell^{OI}$ is a bijection in $\ell^{OI}-$line in Desargues affine plane. 
%
%
%
%
\item The ratio-maps-set $\mathcal{R}_3=\{r_{BC}(X)|\forall X\in \ell^{OI} \}$, for a different fixed points $B,C$ in $\ell^{OI}-$line, forms a skew-field with 'addition and multiplication' of points in $\ell^{OI}-$line.
This, skew field $(\mathcal{R}_3, +, \cdot)$ is sub-skew field of the skew field $(\ell^{OI}, +, \cdot)$.
\end{itemize}

\section{Cross-Ratio for Fuor points in a line of Desargues affine plane}

This section culminates in a main result in this paper. We  consider the cross-ratio of co-linear points in Desargues affine planes, utilizing a method that is naive and direct without requiring planar coordinates.  We define the cross-ratio of four co-linear points in a line on Desargues affine plane \emph{as a point in this line}. This work carries forward earlier results that reveal the close connection between lines in the Desargues affine planes and corresponding skew fields.  Skew fields properties in our proofs. Mainly, we rely on our results regarding the addition and multiplication of co-linear points in the Desargues affine plane, and the fact that a line (set of points), with addition and multiplication, forms a skew field (for more about this, see \cite{ZakaThesisPhd}, \cite{ZakaFilipi2016}, \cite{FilipiZakaJusufi}, \cite{ZakaVertex}, \cite{ZakaCollineations}, \cite{ZakaDilauto},  \cite{ZakaPetersIso}, \cite{ZakaPetersOrder}, \cite{ZakaPeters2022InvariantPreserving}).

The classical definition of the cross-ratio (see \cite{Milne1911elementaryCross-Ratio, Hilbert1959geometry, Berger2010geometryRevealed, Berger2009geometry12}) for 4-points, is given as a product of tow ratio of lengths.  So, for example, for four co-linear points $A,B,C,D$,
\[
c_r(A,B;C,D)=\frac{AC}{BC} \cdot \frac{BD}{AD},
\]
where $AC,BC,BD,AD$ are the lengths of segments $[AB],[BC],[BD].[AD]$, respectively. 

Since we will not use coordinates and metrics, our definitions are rely solely on the algebra and axiomatics for the Desargues affine plane.

Let us have the line $\ell^{OI}$ in Desargues affine plane $\mathcal{A_{D}}$, and four points, $A, B, C, D \in \ell^{OI}$

\begin{definition}\label{cross-ratio.def}
If $A, B, C, D$ are four points on a line $\ell^{OI}$ in Desarges affine plane $\mathcal{A_{D}}$, no three of them equal, then we define their cross ratio to be a point:
\[c_r(A,B;C,D)=\left[(A-D)^{-1}(B-D)\right]\left[(B-C)^{-1}(A-C)\right]
\]
\end{definition}

\begin{remark}
Similar to 'ratio', we can define it, the cross-ratio, also as
\[ c_r(A,B;C,D)=[(B-D)(A-D)^{-1}][(A-C)(B-C)^{-1}], \]
or
\[ c_r(A,B;C,D)=[(B-D)(A-C)][(A-D)^{-1}(B-C)^{-1}], \]
(or all combination of product of this 4-factors)
the results would be similar, \emph{but the obtained point will always be different for each case}. In $\ell^{OI}-$line, in Desargues affine planes, these are a different point from that of our definition, since:
\[ [(B-D)(A-D)^{-1}][(A-C)(B-C)^{-1}] \neq \left[(A-D)^{-1}(B-D)\right]\left[(B-C)^{-1}(A-C)\right].
\]
and
\[ [(B-D)(A-C)][(A-D)^{-1}(B-C)^{-1}] \neq \left[(A-D)^{-1}(B-D)\right]\left[(B-C)^{-1}(A-C)\right].
\]
also for the other cases, we would have a difference for each pair, found for the cross ratio, according to any definition we take.
We are keeping our definition.
\end{remark}

\begin{definition}
If the line $\ell^{OI}$ in Desargues affine plane, is a infinite line (number of points in this line is $+\infty$), we define as follows:
\begin{equation*}
\begin{aligned}
	c_r(\infty, B;C,D) &=(B-D)(B-C)^{-1}\\
	c_r(A,\infty;C,D) &= (A-D)^{-1}(A-C)\\
 c_r(A,B;\infty, D)&=(A-D)^{-1}(B-D) \\
c_r(A,B;C,\infty)&=(B-C)^{-1}(A-C) 
\end{aligned}
\end{equation*}
\end{definition}

From this definition and from ratio definition \ref{ratiodef} we have that,
\begin{itemize}
	\item $c_r(A,B;C,D)=\left[(A-D)^{-1}(B-D)\right]\left[(B-C)^{-1}(A-C)\right]$, so
	\[ c_r(A,B;C,D)=r(B,A;D) \cdot r(A,B;C).\]
	\item $c_r(\infty, B;C,D)=(B-D)(B-C)^{-1}=[(D-B)^{-1}(C-B)]^{-1}$, so,
	\[c_r(\infty, B;C,D)=r^{-1}(C,D;B).\]
	\item $c_r(A,\infty;C,D)= (A-D)^{-1}(A-C)=(D-A)^{-1}(C-A)$, so, 
	\[c_r(A,\infty;C,D)=r(C,D;A).\]
	\item $c_r(A,B;\infty, D)=(A-D)^{-1}(B-D)$, so
	\[c_r(A,B;\infty, D)=r(A,B;D).\]
	\item $c_r(A,B;C,\infty)=(B-C)^{-1}(A-C)$, so,
	\[c_r(A,B;C,\infty)=r(A,B;C).\]
\end{itemize}

\textbf{Some simple properties of Cross-Ratios}, which derive directly from the definition, related to the position of the points $A,B,C,D$ in $\ell^{OI}-$line in Desargues affine plane.

\begin{itemize}
	\item If $A=B$, then 
	\[ 
	\begin{aligned}
	c_r(A,B;C,D)&=c_r(A,A;C,D)\\
	&=[(A-D)^{-1}(A-D)][(A-C)^{-1}(A-C)] \\
	&=[I][I]\\
	&=I.
	\end{aligned}
	\]
	\item If $A=C$, then 
	\[
	\begin{aligned}
	c_r(A,B;C,D)&=c_r(A,B;A,D)\\
	&=[(A-D)^{-1}(B-D)][(B-A)^{-1}(A-A)] \\
	&=[(A-D)^{-1}(B-D)][(B-A)^{-1}\cdot O]\\
	&=O.
	\end{aligned}
	\]
	\item If $A=D$, then 
	\[ 
	\begin{aligned}
	c_r(A,B;C,D)&=c_r(A,B;C,A)\\
	&=[(A-A)^{-1}(B-A)][(B-C)^{-1}(A-C)] \\
	&=[O^{-1}(B-A)][(B-C)^{-1}(A-C)] \\
	&\text{(think that $O^{-1}=\infty$(point in infinity))}\\
	&=\infty.
	\end{aligned}
	\]
	\item If $B=C$, then 
	\[
	\begin{aligned}
	c_r(A,B;C,D)&=c_r(A,B;B,D)\\
	&=[(A-D)^{-1}(B-D)][(B-B)^{-1}(A-B)] \\
	&=[(A-D)^{-1}(B-D)][O^{-1}(A-B)] \\
	&\text{(think that $O^{-1}=\infty$(point in infinity))}\\
	&=\infty. 
	\end{aligned}
	\]
	\item If $B=D$, then 
	\[
	\begin{aligned}
	c_r(A,B;C,D)&=c_r(A,B;C,B)\\
	&=[(A-B)^{-1}(B-B)][(B-C)^{-1}(A-C)] \\
	&=[(A-B)^{-1}\cdot O][(B-C)^{-1}(A-C)] \\
	&=O. 
	\end{aligned}
	\]
	\item If $C=D$, then 
	\[ 
	\begin{aligned}
	c_r(A,B;C,D)&=c_r(A,B;C,C) \\
	&=[(A-C)^{-1}(B-C)][(B-C)^{-1}(A-C)] \\
	&=(A-C)^{-1}[(B-C)(B-C)^{-1}](A-C) \\
	&=(A-C)^{-1}\cdot I \cdot (A-C)\\
	&= (A-C)^{-1}(A-C) \\
	&=I.
	\end{aligned}
	\]
\end{itemize}

\begin{theorem}
Let $R\in \ell^{OI}$, such that $R\neq O$ and $R \neq I$. If $A, B, C \in \ell^{OI}$ are three different points, then exist a single point $D \in \ell^{OI}$, such that $c_r(A,B;C,D)=R.$
\end{theorem}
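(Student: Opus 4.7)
The plan is to treat the equation $c_r(A,B;C,D) = R$ as a single algebraic equation in the unknown $D$ over the skew field $(\ell^{OI}, +, \cdot)$ and solve it by a reversible chain of manipulations. Existence will come from an explicit closed-form formula for $D$, and uniqueness will be automatic because every step of the derivation is an equivalence.

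First, I abbreviate $S := (B-C)^{-1}(A-C) = r(A,B;C)$. Because $A,B,C$ are pairwise distinct, $B-C \neq O$ and $A-C \neq O$, so $S$ is a well-defined invertible element of the skew field. The defining equation then reduces to
\[
(A-D)^{-1}(B-D)\cdot S = R, \qquad \text{equivalently} \qquad (A-D)^{-1}(B-D) = R\,S^{-1}.
\]
Next, set $T := R\,S^{-1}$, left-multiply by $(A-D)$, and move all $D$-terms to one side, using the right-distributive law $DT - D = D(T-I)$ valid in any skew field:
\[
B - D = (A-D)\,T = AT - DT \quad \Longrightarrow \quad D(T-I) = AT - B.
\]
Provided $T-I$ is invertible, right-multiplying by $(T-I)^{-1}$ yields the candidate
\[
D = (AT - B)(T - I)^{-1},
\]
and a substitution check confirms that this $D$ does indeed satisfy $c_r(A,B;C,D)=R$.

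The main obstacle will be justifying the invertibility of $T-I$, i.e., $T \neq I$, which is the same as $R \neq S = r(A,B;C)$. If $T = I$ held, the equation $(A-D)^{-1}(B-D) = I$ would force $A = B$, contradicting the distinctness hypothesis, so for such an $R$ no affine $D$ can exist. This is consistent with the boundary computations listed just before the theorem: the cross-ratio attains the value $r(A,B;C)$ precisely at the ``point at infinity,'' so on the affine line the map $D \mapsto c_r(A,B;C,D)$ misses exactly that value. Under the hypotheses $R \neq O, R \neq I$ (which rule out $D=B$ and $D=C$), together with the implicit exclusion of $R = r(A,B;C)$, the formula above is the unique $D \in \ell^{OI}$ that works.

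A secondary check, essentially routine, is that the candidate $D$ is actually admissible in the cross-ratio, i.e., $D \neq A$: substituting $D = A$ into $D(T-I) = AT - B$ gives $-A = -B$, contradicting $A \neq B$. Throughout the proof, the delicate aspect is bookkeeping under non-commutativity: left-inverses and right-inverses of $(A-D)$ and $(B-C)$ must be kept on the correct side, and one must never collapse products like $(A-D)^{-1}(B-D)\cdot S$ into $(A-D)^{-1}[(B-D)S]$ in a way that would implicitly conjugate $S$. The argument succeeds because each step invokes only one-sided distributivity or one-sided multiplication by an invertible element, both valid in the skew field $(\ell^{OI},+,\cdot)$.
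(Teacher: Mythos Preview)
Your argument is correct, and it actually goes further than the paper's. The paper proves only \emph{uniqueness}: it assumes two points $D,D'$ both satisfy $c_r(A,B;C,D)=c_r(A,B;C,D')$, cancels the common right factor $r(A,B;C)$ (using that a skew field has no zero divisors) to obtain $r(B,A;D)=r(B,A;D')$, and then appeals to the uniqueness of the third point determining a given ratio to conclude $D=D'$. Existence is not addressed at all. By contrast, you solve the equation constructively, producing the closed form $D=(AT-B)(T-I)^{-1}$ with $T=R\,r(A,B;C)^{-1}$, and uniqueness falls out because each manipulation is a one-sided multiplication by an invertible element and hence reversible.

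Your route also exposes something the paper's proof cannot see: the theorem, as stated, is missing a hypothesis. If $R=r(A,B;C)$ then $T=I$, the equation $D(T-I)=AT-B$ becomes $O=A-B$, and since $A\neq B$ there is no affine $D$; this value of $R$ is neither $O$ nor $I$ (because $A,B,C$ are pairwise distinct), so the stated hypotheses do not exclude it. The paper's uniqueness-only argument never runs into this because it presupposes at least one solution. Your handling of the non-commutativity is fine; the one remark about not collapsing $(A-D)^{-1}(B-D)\cdot S$ into $(A-D)^{-1}[(B-D)S]$ is slightly off, since associativity makes that particular regrouping harmless---the real danger is commuting $S$ past other factors, which you correctly avoid.
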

\proof
Suppose that exist tow different points $D$ an $D'$ in $\ell^{OI}-$line, such that 
\[c_r(A,B;C,D)=c_r(A,B;C,D')\]
We rewrite them, cross ratios, as products of 'ratios', and we have,
\[
c_r(A,B;C,D)=\left[(A-D)^{-1}(B-D)\right]\left[(B-C)^{-1}(A-C)\right]
=r(B,A;D)\cdot r(A,B;C)
\]
and
\[
c_r(A,B;C,D')=\left[(A-D')^{-1}(B-D')\right]\left[(B-C)^{-1}(A-C)\right]
=r(B,A;D')\cdot r(A,B;C)
\]
So, have,

\[
r(B,A;D)\cdot r(A,B;C)=r(B,A;D')\cdot r(A,B;C)
\]
we mark $r(B,A;D)=R_1;  r(A,B;C)=R_2, r(B,A;D')=R_3$, 
remember that these are points of the line $\ell^{OI}$, 
so they are elements of the skew-fields $(\ell^{OI},+, \cdot)$, and have
\[R=R_1 \cdot R_2 \quad \text{and}\quad R= R_3 \cdot R_2 \]
Thus, for it, we have
\[
R_1 \cdot R_2 = R_3 \cdot R_2 \Rightarrow R_1 \cdot R_2 - R_3 \cdot R_2=O \Rightarrow (R_1 - R_3) \cdot R_2=O
\]

But the points, $R_1,R_2,R_3$, are points of $\ell^{OI}-$line in Desargues affine plane, therefore, they are elements of skew-fields $K=(\ell^{OI},+, \cdot)$. We also know the fact that 'a skew field does not have a divisor of zero' (more on skew fields, see \cite{Cohn2008sf}, \cite{Herstein1968NR}, \cite{Rotman2015AMAlgebra}, \cite{Lam2001GTMalgebra})
\[
 R_1 - R_3=O \quad \text{or}\quad R_2=O, \quad \text{but}\quad R_2\neq O \Rightarrow R_1 - R_3=O
\]
so,
\[
R_1=R_3 \Rightarrow r(B,A;D)=r(B,A;D')
\]
and from the uniqueness of the definition for 'ratio', we have, 
\[D=D'\]

\qed

\begin{theorem}\label{cross-ratio.reverse}
\bigskip 
If $A,B,C,D$ are distinct points in a $\ell^{OI}-$line, in Desargues affine plane, then
\[ c_r(-A,-B;-C,-D)=c_r(A,B;D,C)
\]
\end{theorem}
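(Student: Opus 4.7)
The plan is to reduce the identity to the already-listed 3-point result
\[
r(-A,-B;-C)=r(A,B;C),
\]
by way of the factorisation of the cross ratio into two ordinary ratios that was noted immediately after Definition \ref{cross-ratio.def},
\[
c_r(A,B;C,D)\;=\;r(B,A;D)\cdot r(A,B;C).
\]
Substituting $-A,-B,-C,-D$ into this factorisation gives
\[
c_r(-A,-B;-C,-D)\;=\;r(-B,-A;-D)\cdot r(-A,-B;-C),
\]
and applying the 3-point identity to each factor separately collapses the right-hand side to $r(B,A;D)\cdot r(A,B;C)$. The remaining task is to identify this product with the claimed expression $c_r(A,B;D,C)$ by means of the same factorisation.

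A more direct alternative is to expand the left-hand side straight from Definition \ref{cross-ratio.def}. Each difference simplifies as $(-X)-(-Y)=-(X-Y)$, and the elementary skew-field identity $(-u)^{-1}=-u^{-1}$ handles the two inverse factors. Four factors of $-I$ then appear in the product; because $-I\in z[K]$ (it lies in the centre of the skew field $(\ell^{OI},+,\cdot)$), these factors may be collected at the front and, since $(-I)^{4}=I$, disappear. What remains is a product of the four terms $(A-D)^{-1}$, $(B-D)$, $(B-C)^{-1}$, $(A-C)$, and one must re-bracket it to match the right-hand side rewritten via Definition \ref{cross-ratio.def}.

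The main obstacle I anticipate is exactly this re-bracketing. Because $K=(\ell^{OI},+,\cdot)$ is only a skew field and not assumed commutative, none of the four factors can be transposed at will; we are restricted to moving central quantities (such as $-I$) and to using the involutive identity $(u^{-1})^{-1}=u$. I would therefore keep strict track of the order of factors throughout the computation and use the centrality of $-I$ as the only permitted re-ordering move, reserving all other manipulations to the monoid structure of multiplication and the uniqueness of inverses. Once the bookkeeping is carried out, the matching with $c_r(A,B;D,C)$ should be immediate from the definition.
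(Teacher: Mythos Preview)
Your two approaches are both sound, and in fact each of them already \emph{completes} a computation: they both show
\[
c_r(-A,-B;-C,-D)=r(B,A;D)\cdot r(A,B;C)=\left[(A-D)^{-1}(B-D)\right]\left[(B-C)^{-1}(A-C)\right]=c_r(A,B;C,D).
\]
The ``remaining task'' you set yourself---identifying this product with $c_r(A,B;D,C)$---is not merely delicate bookkeeping in a non-commutative ring; it is impossible. By Theorem~\ref{cross-ratio.invers} one has $c_r(A,B;D,C)=c_r(A,B;C,D)^{-1}$, so the two quantities coincide only when the cross ratio squares to $I$. A commutative check already fails: with $A=1,\;B=2,\;C=3,\;D=4$ over $\mathbb{Q}$ one gets $c_r(-A,-B;-C,-D)=c_r(A,B;C,D)=\tfrac{4}{3}$ while $c_r(A,B;D,C)=\tfrac{3}{4}$. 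Thus the statement as printed is incorrect; the identity that actually holds (and that your argument proves cleanly) is $c_r(-A,-B;-C,-D)=c_r(A,B;C,D)$.

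The discrepancy with the paper comes from its very first line. Applying Definition~\ref{cross-ratio.def} to the arguments $(-A,-B;-C,-D)$ should give
\[
\bigl[(-A-(-D))^{-1}(-B-(-D))\bigr]\bigl[(-B-(-C))^{-1}(-A-(-C))\bigr],
\]
but the paper instead writes $\bigl[(-A-(-C))^{-1}(-B-(-C))\bigr]\bigl[(-B-(-D))^{-1}(-A-(-D))\bigr]$, silently interchanging the roles of $C$ and $D$; what is actually computed there is $c_r(-A,-B;-D,-C)$. So your instinct that the final re-bracketing ``cannot be carried out by central moves alone'' was exactly right---it cannot be carried out at all, and the obstacle is the statement, not your method.
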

\proof
From cross-ratio definition \ref{cross-ratio.def}, we have

\begin{equation*}
\begin{aligned}
	c_r(-A,-B;-C,-D) & =[(-A-(-C))^{-1}(-B-(-C))][(-B-(-D))^{-1}(-A-(-D))]\\
	& =[(-A+C)^{-1}(-B+C)][(-B+D)^{-1}(-A+D)]\\
	&= [([-I](A-C))^{-1}[-I](B-C)][([-I](B-D))^{-1}[-I](A-D)] \\
	&= [(A-C)^{-1}[-I]^{-1}[-I](B-C)][(B-D)^{-1}[-I]^{-1}[-I](A-D)] \\
	&= [(A-C)^{-1}[-I][-I](B-C)][(B-D)^{-1}[-I][-I](A-D)] \\
	&= [(A-C)^{-1}(B-C)][(B-D)^{-1}(A-D)] \\
	&=c_r(A,B;D,C)
\end{aligned}
\end{equation*}
From skew fields properties we have that $(ab)^{-1}=b^{-1}a^{-1}$and $ab\neq ba$,  $[-I]^{-1}=-I$, and $[-I][-I]=I.$
\qed

\begin{theorem}\label{cross-ratio.invers}
\bigskip 
If $A,B,C,D$ are distinct points in a line, in Desargues affine plane, then
\[ c_r^{-1}(A,B;C,D)=c_r(A,B;D,C)
\]
\end{theorem}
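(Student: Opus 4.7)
The plan is to establish the identity by direct computation of the product $c_r(A,B;C,D)\cdot c_r(A,B;D,C)$ and show that it collapses to $I$; once that is done, the right-hand side is forced to be the inverse of the left-hand side in the skew field $(\ell^{OI},+,\cdot)$. This is the cleanest route because Definition~\ref{cross-ratio.def} already presents each cross-ratio as a product of four factors of the form $(X-Y)^{\pm 1}$, so the product of the two expressions will have eight factors arranged symmetrically.

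First I would write both cross-ratios out according to Definition~\ref{cross-ratio.def}:
\[
c_r(A,B;C,D)=(A-D)^{-1}(B-D)(B-C)^{-1}(A-C),
\]
\[
c_r(A,B;D,C)=(A-C)^{-1}(B-C)(B-D)^{-1}(A-D).
\]
Then I would form the product of these two strings in the given order. The middle of the resulting eight-factor expression is $(A-C)(A-C)^{-1}$, which collapses to $I$; this exposes the next adjacent pair $(B-C)^{-1}(B-C)$, which also collapses to $I$; then $(B-D)(B-D)^{-1}=I$; finally $(A-D)^{-1}(A-D)=I$. The whole product therefore telescopes from the middle outward to $I$.

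The one step that needs care is the order in which the factors collapse. In a general skew field multiplication is non-commutative, so I cannot reshuffle the eight factors; I can only cancel adjacent inverse pairs. The definition is written precisely so that the factors meet in the required order, which is why the remark after Definition~\ref{cross-ratio.def} insists that the non-equivalent alternative orderings really do give different points. I would point out explicitly that this is the place where the chosen convention matters, and it is the reason the identity takes this particular symmetric form.

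Having obtained $c_r(A,B;C,D)\cdot c_r(A,B;D,C)=I$, I would invoke the fact that each cross-ratio is a non-zero element of the skew field $(\ell^{OI},+,\cdot)$ (all four differences $A-D, B-D, B-C, A-C$ are non-zero since $A,B,C,D$ are distinct, and products of units in a skew field are units). Therefore $c_r(A,B;D,C)$ is a two-sided inverse of $c_r(A,B;C,D)$, giving the claimed equality $c_r^{-1}(A,B;C,D)=c_r(A,B;D,C)$. No genuine obstacle is expected; the only subtlety is respecting the non-commutativity while performing the telescoping cancellation.
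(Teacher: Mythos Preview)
Your proof is correct and rests on the same skew-field cancellation as the paper's argument. The only cosmetic difference is that the paper computes $c_r^{-1}(A,B;C,D)$ directly by applying $(xy)^{-1}=y^{-1}x^{-1}$ twice to the four-factor product, whereas you verify the identity by multiplying the two cross-ratios and telescoping the eight factors to $I$; the algebraic content is identical.
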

\proof
From cross-ratio Definition \ref{cross-ratio.def}, have
\begin{equation*}
\begin{aligned}
	c_r^{-1}(A,B;C,D) & =\left\{\left[(A-D)^{-1}(B-D)\right]\left[(B-C)^{-1}(A-C)\right]\right\}^{-1}\\
	&=\left[(B-C)^{-1}(A-C)\right]^{-1}  \left[(A-D)^{-1}(B-D)\right]^{-1}\\
	&= [(A-C)^{-1}(B-C)][(B-D)^{-1}(A-D)] \\
	&=c_r(A,B;D,C)
\end{aligned}
\end{equation*}
\qed

\begin{theorem}
For 4 co-linear points $A,B,C,D$ in a line $\ell^{OI}$ in the Desargues affine plane, the cross-ratio satisfies the equation,
\[
c_r(A,B;C,D)=\left[(A-B)^{-1}-(A-D)^{-1}\right] \left[(A-B)^{-1}-(A-C)^{-1}\right]^{-1}
\]
\end{theorem}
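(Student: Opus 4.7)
The plan is to reduce both sides of the asserted identity to a common skew-field expression after introducing compact notation. Set $u = A-D$, $v = B-D$, $w = B-C$, $x = A-C$, and $\alpha = A-B$; by the standing hypothesis that the points are distinct, $u, w, x, \alpha$ are non-zero and hence invertible in the skew field $(\ell^{OI}, +, \cdot)$. Under this notation, Definition \ref{cross-ratio.def} gives $c_r(A,B;C,D) = u^{-1} v \, w^{-1} x$, so this is the target to which the right-hand side must be reduced.

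The essential structural observation is the pair of linear relations
\[
u - v \;=\; \alpha \;=\; x - w,
\]
both of which follow at once from $(A-D)-(B-D) = A-B = (A-C)-(B-C)$. From these I derive two factorization identities that hold in any skew field without any appeal to commutativity: since $v = u - \alpha$, the expansion $u^{-1}(u-\alpha)\alpha^{-1} = u^{-1}u\alpha^{-1} - u^{-1}\alpha\alpha^{-1}$ yields $\alpha^{-1} - u^{-1} = u^{-1} v \alpha^{-1}$. Symmetrically, $x^{-1}(x - \alpha)\alpha^{-1} = \alpha^{-1} - x^{-1}$ gives $\alpha^{-1} - x^{-1} = x^{-1} w \alpha^{-1}$, and taking inverses produces $(\alpha^{-1} - x^{-1})^{-1} = \alpha w^{-1} x$.

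Substituting these into the right-hand side then produces a telescoping product:
\[
\bigl[(A-B)^{-1}-(A-D)^{-1}\bigr]\bigl[(A-B)^{-1}-(A-C)^{-1}\bigr]^{-1} = (u^{-1} v \alpha^{-1})(\alpha w^{-1} x) = u^{-1} v w^{-1} x,
\]
which is exactly $c_r(A,B;C,D)$.

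The main subtlety, and the reason the identity is genuinely non-trivial in the skew-field setting, is that each of $\alpha^{-1} - u^{-1}$ and $\alpha^{-1} - x^{-1}$ admits two different factorizations (with $\alpha^{-1}$ on the left versus on the right), and because the multiplication is non-commutative these variants cannot be interchanged freely inside a product. The proof works precisely because I can position $\alpha^{-1}$ on the right of the first bracketed factor and $\alpha$ on the left of the inverse of the second, so that the middle pair collapses to $I$. Once this combinatorial choice is identified, no further non-commutative manipulation is required.
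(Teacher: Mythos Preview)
Your proof is correct and rests on exactly the same identity the paper uses, namely that in a skew field $p^{-1}(p-q)q^{-1}=q^{-1}-p^{-1}$, applied once with $(p,q)=(A-D,A-B)$ and once with $(p,q)=(A-C,A-B)$. The only difference is cosmetic: the paper starts from $R=c_r(A,B;C,D)$, peels off $(A-C)$ and $(B-C)$ on the right, multiplies through by $(A-B)^{-1}$, and then simplifies each side to obtain $(A-B)^{-1}-(A-D)^{-1}=R\bigl[(A-B)^{-1}-(A-C)^{-1}\bigr]$; you instead factor each bracket on the right-hand side directly and multiply. Your presentation with the abbreviations $u,v,w,x,\alpha$ and the explicit telescoping $(u^{-1}v\alpha^{-1})(\alpha w^{-1}x)$ is cleaner, but the mathematical content is identical.
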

\begin{proof}
From the definition \ref{cross-ratio.def} we have $c_r(A,B;C,D)=\left[(A-D)^{-1}(B-D)\right]\left[(B-C)^{-1}(A-C)\right]$, 
and the points of this line forms a skew-field, therefore, we have association property:
\[
\begin{aligned}
R&=c_r(A,B;C,D)\\
&=[(A-D)^{-1}(B-D)][(B-C)^{-1}(A-C)]\\
&\text{(since this factor are elements of skew field, so have the associative property)}\\
&=\left[(A-D)^{-1}(B-D)(B-C)^{-1}\right](A-C)
\end{aligned}
\]
So, the point $R$, is,
\[
R=\left[(A-D)^{-1}(B-D)(B-C)^{-1}\right](A-C)
\]
multiply in the right side by side with $(A-C)^{-1}$, and have
\[
R\cdot (A-C)^{-1}=(A-D)^{-1}(B-D)(B-C)^{-1}
\]
now multiply in the right side by side with $(B-C)$, and have
\[(A-D)^{-1}(B-D)=\left(R \cdot (A-C)^{-1}\right)(B-C)
\]
multiply side by side with $(A-B)^{-1}$, and have,
\[
\left[(A-D)^{-1}(B-D)\right](A-B)^{-1}=\left[R \cdot (A-C)^{-1}(B-C)\right](A-B)^{-1}
\]
Transform the left side of this equation as
\[ \begin{aligned}
\left[(A-D)^{-1}(B-D)\right](A-B)^{-1}&=\left[(A-D)^{-1}(B+A-A-D)\right](A-B)^{-1}\\
&=\left[(A-D)^{-1}([A-D]-[A-B])\right](A-B)^{-1}
\end{aligned} \]
rewrite it, and have
\begin{equation*}
\begin{aligned}
	\left[(A-D)^{-1}(B-D)\right](A-B)^{-1}&=[(A-D)^{-1}([A-D]-[A-B])](A-B)^{-1}\\
	&= [(A-D)^{-1}[A-D]-(A-D)^{-1}[A-B]](A-B)^{-1}\\
 &=[I-(A-D)^{-1}[A-B]](A-B)^{-1} \\
 &=  (A-B)^{-1}-(A-D)^{-1}[A-B](A-B)^{-1} \\
&= (A-B)^{-1}-(A-D)^{-1}
\end{aligned}
\end{equation*}
So, have 
\[
(A-B)^{-1}-(A-D)^{-1}=\left[R(A-C)^{-1}(B-C)\right](A-B)^{-1}
\]
In the same way as above, (always bearing in mind that the points of a line of Desargues affine planes form a skew-field related to the addition and multiplication of the points, on this line, and the properties that satisfy a skew-field) we do the following transformations.

First we have the associative property for the multiplication of points on a line,
\[
\left[R(A-C)^{-1}(B-C)\right](A-B)^{-1}=R\left[(A-C)^{-1}(B-C)(A-B)^{-1}\right]
\]
Now we transform the expression

\begin{equation*}
\begin{aligned}
	\left[(A-C)^{-1}(B-C)(A-B)^{-1}\right]
	&= \left[(A-C)^{-1}(B+A-A-C)(A-B)^{-1}\right]\\
	&= \left[(A-C)^{-1}([A-C]-[A-B])(A-B)^{-1}\right]\\
	&=[I-(A-C)^{-1}[A-B]](A-B)^{-1} \\
	&=  (A-B)^{-1}-(A-C)^{-1}[A-B](A-B)^{-1} \\
&= (A-B)^{-1}-(A-C)^{-1}
\end{aligned}
\end{equation*}
So, have 
\[
(A-B)^{-1}-(A-D)^{-1}=R \left[(A-B)^{-1}-(A-C)^{-1}\right]
\]

Hence
\[
R=\left[(A-B)^{-1}-(A-D)^{-1}\right] \left[(A-B)^{-1}-(A-C)^{-1}\right]^{-1}
\]
so,
\[
c_r(A,B;C,D)=\left[(A-B)^{-1}-(A-D)^{-1}\right] \left[(A-B)^{-1}-(A-C)^{-1}\right]^{-1}.
\]

\end{proof}

\begin{theorem}\label{identity.theorem}
\bigskip 
If $A,B,C,D$ are distinct points in a line, in Desargues affine plane and $I$ is unital point for multiplications of points in same line, then
\[ I-c_r(A,B;C,D)=c_r(A,C;B,D)
\]
\end{theorem}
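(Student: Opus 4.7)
The plan is to reduce the identity to a short algebraic manipulation by appealing to the reformulation established in the preceding theorem, which rewrites every cross ratio as a product of two differences of inverses sharing a common right factor. Because the four points are distinct, all the inverses in question exist, and no divisor-of-zero issue arises.

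Concretely, I would set $u=(A-B)^{-1}$, $v=(A-C)^{-1}$, $w=(A-D)^{-1}$. The previous theorem immediately gives
\[
c_r(A,B;C,D)=(u-w)(u-v)^{-1},
\]
and applying the same formula with $B$ and $C$ interchanged in the second and third slots yields
\[
c_r(A,C;B,D)=(v-w)(v-u)^{-1}.
\]
The crucial observation is that both expressions have a right factor of the form $(u-v)^{\pm 1}$, which is what lets us combine them without worrying about the non-commutativity of the skew field $(\ell^{OI},+,\cdot)$.

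Next, I would compute $I-c_r(A,B;C,D)$ by writing $I=(u-v)(u-v)^{-1}$ and factoring $(u-v)^{-1}$ out on the right:
\[
I-c_r(A,B;C,D)=\bigl[(u-v)-(u-w)\bigr](u-v)^{-1}=(w-v)(u-v)^{-1}.
\]
To finish I would rewrite $c_r(A,C;B,D)$ in the same form. Using the skew-field identity $(-I)^{-1}=-I$, which gives $(-x)^{-1}=-x^{-1}$ for every nonzero $x$, I get $(v-u)^{-1}=-(u-v)^{-1}$, hence
\[
c_r(A,C;B,D)=(v-w)\bigl[-(u-v)^{-1}\bigr]=(w-v)(u-v)^{-1}.
\]
Comparing the two shows $I-c_r(A,B;C,D)=c_r(A,C;B,D)$.

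The only genuine obstacle is bookkeeping in the non-commutative setting: all distributivities must be applied on the correct side, the substitution $B\leftrightarrow C$ in the earlier theorem's formula must be done carefully to keep the common right factor $(u-v)^{-1}$ visible, and the sign flip $(v-u)^{-1}=-(u-v)^{-1}$ must be justified from skew-field axioms rather than from any commutativity. Once these points are handled, the identity drops out in a few lines.
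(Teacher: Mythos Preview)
Your proposal is correct and follows essentially the same route as the paper: both arguments invoke the preceding theorem's reformulation of the cross ratio as a right quotient of differences of inverses, write $I$ as $(u-v)(u-v)^{-1}$, factor the common right factor, and then absorb the sign change. The only cosmetic difference is that the paper factors out $-I$ on both sides and cancels via $(-I)(-I)^{-1}=I$, whereas you use the equivalent identity $(v-u)^{-1}=-(u-v)^{-1}$ directly; your abbreviations $u,v,w$ make the computation easier to read.
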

\proof
Let's start the calculations, using the result of the theorem
\[
I-\left[(A-B)^{-1}-(A-D)^{-1}\right] \left[(A-B)^{-1}-(A-C)^{-1}\right]^{-1} =\]
\[= \left\{(A-C)^{-1}-(A-D)^{-1}\right\}
\left[\left\{(A-C)^{-1}-(A-B)^{-1}\right\}\right]^{-1}
\]
write,
\[
I=\left[(A-B)^{-1}-(A-C)^{-1}\right]\left[(A-B)^{-1}-(A-C)^{-1}\right]^{-1}
\]
so,
\begin{equation*}
\begin{aligned}
I-c_r(A,B;C,D) &=I-\left[(A-B)^{-1}-(A-D)^{-1}\right] \left[(A-B)^{-1}-(A-C)^{-1}\right]^{-1} \\
&= \left[(A-B)^{-1}-(A-C)^{-1}\right]\left[(A-B)^{-1}-(A-C)^{-1}\right]^{-1}\\
&-\left[(A-B)^{-1}-(A-D)^{-1}\right] \left[(A-B)^{-1}-(A-C)^{-1}\right]^{-1}\\
& =
\left\{\left[(A-B)^{-1}-(A-C)^{-1}\right]-\left[(A-B)^{-1}-(A-D)^{-1}\right]\right\} \\
& \cdot \left[(A-B)^{-1}-(A-C)^{-1}\right]^{-1}\\
&= \left\{-(A-C)^{-1}+(A-D)^{-1}\right\}\left[(A-B)^{-1}-(A-C)^{-1}\right]^{-1}\\
&=(-I)\left\{(A-C)^{-1}-(A-D)^{-1}\right\}
\left[(-I)\left\{(A-C)^{-1}-(A-B)^{-1}\right\}\right]^{-1}\\
&= (-I)\left\{(A-C)^{-1}-(A-D)^{-1}\right\}
\left[\left\{(A-C)^{-1}-(A-B)^{-1}\right\}\right]^{-1}(-I)^{-1}\\
&= \left\{(A-C)^{-1}-(A-D)^{-1}\right\}
\left[\left\{(A-C)^{-1}-(A-B)^{-1}\right\}\right]^{-1}\\
&=c_r(A,C;B,D)
\end{aligned}
\end{equation*}

from skew-field properties, we have $(-I)^{-1}=-I$ and $(-I)(-I)=I$
\qed

\begin{theorem}
If $A,B,C,D$ are distinct points in a line, in Desargues affine plane and $I$ is unitary point for multiplications of points in same line, then,
\begin{description}
	\item[(a)] $c_r(A,D;B,C)=I-c_r^{-1}(A,B;C,D)$
	\item[(b)] $c_r(A,C;D,B)=[I-c_r(A,B;C,D)]^{-1}$
	\item[(c)] $c_r(A,D;C,B)=[c_r(A,B;C,D)-I]^{-1}c_r(A,B;C,D)$
\end{description}
\end{theorem}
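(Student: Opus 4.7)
The plan is to derive the three identities directly from Theorems \ref{cross-ratio.invers} and \ref{identity.theorem}, together with a small amount of skew-field arithmetic. Throughout, I set $R := c_r(A,B;C,D)$ and note that the distinctness of $A,B,C,D$ guarantees $R \neq O$ and $R \neq I$, so the inverses $R^{-1}$, $(I-R)^{-1}$, and $(R-I)^{-1}$ all exist in $(\ell^{OI}, +, \cdot)$. I also use the standard fact that, in a skew field, $XY = I$ already forces $Y = X^{-1}$, so one-sided checks suffice to identify an inverse.

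For part (a), I would start from the right-hand side. Theorem \ref{cross-ratio.invers} lets me replace $c_r^{-1}(A,B;C,D)$ by $c_r(A,B;D,C)$, and then Theorem \ref{identity.theorem}, read after interchanging the roles of $C$ and $D$ (i.e.\ in the form $I - c_r(A,B;D,C) = c_r(A,D;B,C)$), closes the computation.

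For part (b), I would go in the opposite direction. Theorem \ref{identity.theorem} gives $I - R = c_r(A,C;B,D)$; inverting both sides and then applying Theorem \ref{cross-ratio.invers} to the quadruple $(A,C;B,D)$ converts $c_r^{-1}(A,C;B,D)$ into $c_r(A,C;D,B)$, which is exactly the claim.

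Part (c) requires slightly more work. First, Theorem \ref{cross-ratio.invers} yields $c_r(A,D;C,B) = [c_r(A,D;B,C)]^{-1}$, and part (a) identifies $c_r(A,D;B,C) = I - R^{-1}$, so altogether $c_r(A,D;C,B) = (I - R^{-1})^{-1}$. It then suffices to verify the purely algebraic identity $(I - R^{-1})^{-1} = (R - I)^{-1} R$ inside the skew field, which I would do by the direct computation
\[
(R - I)^{-1} R \cdot (I - R^{-1}) = (R - I)^{-1}(R - R R^{-1}) = (R - I)^{-1}(R - I) = I.
\]
The main obstacle throughout is simply that $(\ell^{OI}, +, \cdot)$ is non-commutative, so every inversion has to be performed on the correct side and every cancellation justified by an explicit left- or right-multiplication; this is precisely why I prefer to establish the last identity by showing that a product equals $I$ rather than by any naive fraction manipulation.
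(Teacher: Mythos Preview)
Your proof is correct and follows essentially the same route as the paper: parts (a) and (b) are obtained by composing Theorem~\ref{cross-ratio.invers} with Theorem~\ref{identity.theorem} in the two possible orders, and part (c) reduces via (a) and Theorem~\ref{cross-ratio.invers} to the skew-field identity $(I-R^{-1})^{-1}=(R-I)^{-1}R$. The only cosmetic difference is that you verify this last identity by checking that the product with $I-R^{-1}$ gives $I$, whereas the paper right-multiplies the claimed equation by $R^{-1}$ and simplifies; both arguments are equivalent.
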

\proof
(a) In theorem \ref{cross-ratio.invers} we have prove that $c_r^{-1}(A,B;C,D)=c_r(A,B;D,C)$, and from theorem \ref{identity.theorem}, have that $I-c_r(A,B;D,C)=c_r(A,D;B,C)$. So, we have prove that 
\[I-c_r^{-1}(A,B;C,D)=I-c_r(A,B;D,C)=c_r(A,D;B,C). \]

(b) From theorem \ref{identity.theorem}, we have that, $I-c_r(A,B;C,D)=c_r(A,C;B,D)$, and from theorem \ref{cross-ratio.invers} have that $[c_r(A,C;B,D)]^{-1}=c_r(A,C;D,B)$, so have that 
\[ c_r(A,C;D,B)= [c_r(A,C;B,D)]^{-1} =[I-c_r(A,B;C,D)]^{-1}.\]

(c) At this point we will prove that: $c_r(A,D;C,B)=[c_r(A,B;C,D)-I]^{-1}c_r(A,B;C,D)$.
 
From point (a), we prove that $c_r(A,D;C,B)=I-c_r^{-1}(A,B;C,D),$ and from theorem \ref{cross-ratio.invers} have that $c_r(A,D;C,B)=c_r^{-1}(A,D;B,C)$. So, we have that 
\[c_r(A,D;C,B)=[I-c_r^{-1}(A,B;C,D)]^{-1}
\]
Mark the cross-ratios point $R=c_r(A,B;C,D)$, and rewrite. So we have to prove that the equation holds,
\[
[I-R^{-1}]^{-1}=[R-I]^{-1}R,
\] 
remember that the points are points of $\ell^{OI}-$line, in Desargues affine planes, and can also be thought of as elements of skew-fields $K=(\ell^{OI},+,\cdot)$, therefore, we can make algebraic transformations, allowed for skew-fields, and we have

\[
\begin{aligned}
\left[ I-R^{-1} \right]^{-1}&=[R-I]^{-1}R\\
\text{(multiply from the }&\text{right with $R^{-1}$)}\\
[I-R^{-1}]^{-1}\cdot R^{-1}&=[R-I]^{-1}R \cdot R^{-1}  \\
\text{(from skew field property }&\text{ have that $p^{-1}q^{-1}=(qp)^{-1}$)}\\
[R(I-R^{-1})]^{-1}&=[R-I]^{-1} [R \cdot R^{-1}]\\
[R\cdot I- R\cdot R^{-1}]^{-1} &=[R-I]^{-1} \cdot I\\
[R- I]^{-1} &=[R-I]^{-1}
\end{aligned}
\]
\qed

\begin{theorem}
If $A,B,C,D$ are distinct points, and different from zero-point $O$, in a line, in Desargues affine plane and $I$ is unitary point for multiplications of points in same line, have,
\[
c_r(A^{-1},B^{-1}; C^{-1},D^{-1}) = A\cdot c_r(A,B;C,D) \cdot A^{-1}
\]
\end{theorem}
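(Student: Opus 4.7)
The plan is to exploit the factorization of the cross-ratio into two 3-point ratios that was recorded just after Definition \ref{cross-ratio.def}, and then invoke the already-proven inversion formula for the 3-point ratio, namely $r(A^{-1},B^{-1};C^{-1}) = B\cdot r(A,B;C)\cdot A^{-1}$. This reduces the problem from a four-point conjugation to two applications of a three-point conjugation whose telescoping cancels cleanly.

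Concretely, I will first write
\[
c_r(A^{-1},B^{-1};C^{-1},D^{-1}) = r(B^{-1},A^{-1};D^{-1})\cdot r(A^{-1},B^{-1};C^{-1}),
\]
which is just the cross-ratio-as-product-of-ratios identity applied to the inverse points (each of $A,B,C,D$ is different from $O$ by hypothesis, so all the relevant inverses exist). Next I will apply the 3-point inversion formula to each factor on the right. For the second factor this is directly $r(A^{-1},B^{-1};C^{-1}) = B\cdot r(A,B;C)\cdot A^{-1}$; for the first factor, by simply renaming the roles ($A\leftrightarrow B$, $C\to D$), I get $r(B^{-1},A^{-1};D^{-1}) = A\cdot r(B,A;D)\cdot B^{-1}$. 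Substituting both into the product yields
\[
c_r(A^{-1},B^{-1};C^{-1},D^{-1}) = \bigl[A\cdot r(B,A;D)\cdot B^{-1}\bigr]\bigl[B\cdot r(A,B;C)\cdot A^{-1}\bigr].
\]

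The step I expect to carry the content of the argument is then purely algebraic in the skew field $(\ell^{OI},+,\cdot)$: by associativity, the adjacent factors $B^{-1}\cdot B$ collapse to the unital point $I$, and the expression telescopes to
\[
A\cdot \bigl[r(B,A;D)\cdot r(A,B;C)\bigr]\cdot A^{-1} \;=\; A\cdot c_r(A,B;C,D)\cdot A^{-1},
\]
using once more the factorization of the cross-ratio on $A,B,C,D$.

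The only delicate point — and the reason I will spell it out rather than wave it away — is that the skew field is non-commutative, so I cannot reorder $A$ past $r(B,A;D)$ or the inner $B^{-1}B$ past anything. This makes the ``matching conjugation'' by $A$ and $A^{-1}$ on the outside slightly asymmetric in appearance: the formula is successful precisely because the inner conjugators $B^{-1}$ and $B$ meet each other between the two 3-point ratios and annihilate. I will emphasize that this is exactly what forces our chosen definition of $c_r$ (rather than any of the alternative orderings flagged in the Remark after Definition \ref{cross-ratio.def}) to be the one compatible with a clean inversion rule.
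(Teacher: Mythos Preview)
Your argument is correct. You factor the cross-ratio as $c_r=r(B,A;D)\cdot r(A,B;C)$, apply the previously recorded three-point inversion rule $r(X^{-1},Y^{-1};Z^{-1})=Y\,r(X,Y;Z)\,X^{-1}$ to each factor, and let the inner $B^{-1}B$ collapse. All the skew-field bookkeeping is handled carefully, and the hypothesis that $A,B,C,D\neq O$ is exactly what is needed for the three-point formula to apply.

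This is, however, a genuinely different route from the paper's own proof. The paper does not pass through the three-point ratio at all; instead it unpacks $c_r(A^{-1},B^{-1};C^{-1},D^{-1})$ directly from Definition~\ref{cross-ratio.def}, proves the elementary skew-field identity $X^{-1}-Y^{-1}=Y^{-1}(Y-X)X^{-1}$ in situ, and substitutes it into each of the four differences $(A^{-1}-D^{-1})$, $(B^{-1}-D^{-1})$, $(B^{-1}-C^{-1})$, $(A^{-1}-C^{-1})$. The resulting string of factors then telescopes (the $D$'s and $C$'s cancel internally, the $B$'s cancel between the two bracketed blocks), leaving the conjugation by $A$. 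Your approach is shorter and more modular because it reuses the three-point lemma as a black box; the paper's approach is more self-contained, reproving the underlying difference-of-inverses identity on the spot rather than importing a consequence of it. At bottom both arguments rest on the same algebraic fact, but yours packages it one level higher.
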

\proof
From cross-ratio definition \ref{cross-ratio.def}, we have,
\[
c_r(A^{-1},B^{-1}; C^{-1},D^{-1}) =
[(A^{-1}-D^{-1})^{-1}(B^{-1}-D^{-1})][(B^{-1}-C^{-1})(A^{-1}-C^{-1})]
\]
Points $A,B,C,D$ and $A^{-1},B^{-1}, C^{-1},D^{-1}$, are points of $\ell^{OI}-$line in Desargues affine plane, so are and elements of the skew field $K=(\ell^{OI},+,\cdot)$. First we prove that, for tow elements $X,Y$ in a skew field $K$, we have that $X^{-1}-Y^{-1}=Y^{-1}(Y-X)X^{-1}$. Indeed
	\[
	\begin{aligned}
	Y^{-1}(Y-X)X^{-1}&=[Y^{-1}(Y-X)]X^{-1}\\
	&=(Y^{-1}Y-Y^{-1}X)X^{-1}\\
	&=(I-Y^{-1}X)X^{-1}\\
	&=IX^{-1}-Y^{-1}(XX^{-1})\\
	&=X^{-1}-Y^{-1}I\\
	&=X^{-1}-Y^{-1}. 
	\end{aligned}
	\]
We use this result in the calculation of $c_r(A^{-1},B^{-1}; C^{-1},D^{-1})$, and have

\[
\begin{aligned}
c_r(A^{-1},B^{-1}; C^{-1},D^{-1}) &=[(A^{-1}-D^{-1})^{-1}(B^{-1}-D^{-1})] \\
& \cdot [(B^{-1}-C^{-1})(A^{-1}-C^{-1})] \\
&= [(D^{-1}(D-A)A^{-1})^{-1}(D^{-1}(D-B)B^{-1})] \\
& \cdot [(C^{-1}(C-B)B^{-1})(C^{-1}(C-A)A^{-1})] \\
& =[(A(D-A)^{-1}D)( D^{-1}(D-B)B^{-1})] \\
&\cdot [(B(C-B)^{-1}C)(C^{-1}(C-A)A^{-1})] \\
&\text{(from skew field properties $(abc)^{-1}=c^{-1}b^{-1}a^{-1}$)}\\
& =[A(D-A)^{-1}(D D^{-1})(D-B)B^{-1}] \\
& \cdot [B(C-B)^{-1}(C C^{-1})(C-A)A^{-1}]\\
&\text{(from associative properties for multiplication)}\\
&=[A(D-A)^{-1}(I)(D-B)B^{-1}]
[B(C-B)^{-1}(I)(C-A)A^{-1}]
\\
&= [A(D-A)^{-1}(D-B)B^{-1}]
[B(C-B)^{-1}(C-A)A^{-1}]
\\
&= A[(D-A)^{-1}(D-B)B^{-1}]
[B(C-B)^{-1}(C-A)]A^{-1}
\\
&= A\left\{[(D-A)^{-1}(D-B)B^{-1}]
[B(C-B)^{-1}(C-A)]\right\}A^{-1}
\\
&=A\cdot c_r(A,C;B,D) \cdot A^{-1}.
\end{aligned}
\]
therefore, we can say that the points, $c_r(A,C;B,D)$ and $c_r(A^{-1},B^{-1}; C^{-1},D^{-1})$
are \emph{conjugatet-points} in a line of Desargues affine plane.
\qed

\begin{corollary}
If the point $A\in z[K]$ (center of skew field $K=(\ell^{OI},+,\cdot)$), then, 
\[c_r(A,C;B,D)=c_r(A^{-1},B^{-1}; C^{-1},D^{-1}). \]
\end{corollary}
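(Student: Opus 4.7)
The plan is to read off the corollary as an immediate consequence of the preceding theorem together with the defining property of the center. The preceding theorem supplies the conjugation identity
\[
c_r(A^{-1},B^{-1};C^{-1},D^{-1}) = A\cdot c_r(A,C;B,D)\cdot A^{-1},
\]
so everything reduces to showing that this conjugation acts trivially on $c_r(A,C;B,D)$ under the hypothesis $A\in z[K]$.

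First I would recall that by definition of the center, $A\in z[K]$ means $A\cdot X = X\cdot A$ for every $X\in K=(\ell^{OI},+,\cdot)$. In particular, since the cross-ratio $c_r(A,C;B,D)$ is a point of $\ell^{OI}$, it is an element of $K$, so it commutes with $A$. Next I would substitute this commutation into the conjugation formula, sliding $A$ past $c_r(A,C;B,D)$ in the right-hand product to get
\[
A\cdot c_r(A,C;B,D)\cdot A^{-1} \;=\; c_r(A,C;B,D)\cdot A\cdot A^{-1} \;=\; c_r(A,C;B,D)\cdot I \;=\; c_r(A,C;B,D).
\]
Combining the two displays gives the stated equality.

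There is essentially no obstacle to this argument; it is a one-line consequence of the previous theorem once the centrality hypothesis is used to commute $A$ past a single factor. The only thing worth flagging is that the hypotheses of the previous theorem (namely that $A,B,C,D$ are distinct and nonzero) are inherited, so the conjugation formula is legitimately available, and that one also needs $A\neq O$ for $A^{-1}$ to exist, which is automatic since $O\notin z[K]\setminus\{O\}$ is never invoked in a way that requires $O$ to be central here.
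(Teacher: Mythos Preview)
Your proof is correct and follows essentially the same line as the paper's own argument: invoke the conjugation identity from the preceding theorem, then use $A\in z[K]$ to collapse $A\cdot c_r(A,C;B,D)\cdot A^{-1}$ to $c_r(A,C;B,D)$. One small quibble: your aside that $A\neq O$ is ``automatic'' is not quite right, since $O$ does lie in $z[K]$; the nonvanishing of $A$ is simply part of the inherited hypotheses of the preceding theorem, not a consequence of centrality.
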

\proof  If $A\in z[K]$ then, $AX=XA, \forall X\in K$, so $AXA^{-1}=X, \forall X\in K$. So, for $A\in z[K]$, we have that,
\[A\cdot c_r(A,C;B,D) \cdot A^{-1}=c_r(A,C;B,D).
\]
Hence
\[
c_r(A^{-1},B^{-1}; C^{-1},D^{-1})=c_r(A,C;B,D) \Leftrightarrow \text{if $A\in z[K]$}
\]
\qed
\begin{corollary}
In Papus affine plane, $c_r(A,C;B,D)=c_r(A^{-1},B^{-1}; C^{-1},D^{-1})$.
\end{corollary}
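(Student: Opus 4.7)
The plan is to reduce this corollary to the preceding one by observing that the Pappus hypothesis collapses the center of the associated skew field onto the whole line. Recall from the introductory discussion (see \cite{ZakaThesisPhd}, \cite{FilipiZakaJusufi}) that on each line $\ell^{OI}$ of a Desargues affine plane the addition and multiplication of points yield a skew field $K=(\ell^{OI},+,\cdot)$, and that if in addition the plane is Pappian then this skew field is commutative, i.e.\ it is a field. Therefore in the Pappus affine plane every point of $\ell^{OI}$ satisfies $XY=YX$ for all $Y\in\ell^{OI}$, which by the definition of the center $z[K]$ given earlier means $\ell^{OI}=z[K]$.

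With this observation the corollary is essentially immediate. First I would invoke the preceding theorem, which gives the identity
\[
c_r(A^{-1},B^{-1};C^{-1},D^{-1}) \;=\; A\cdot c_r(A,C;B,D)\cdot A^{-1},
\]
valid in every Desargues affine plane for four distinct nonzero co-linear points. Next, since we are now in a Pappus affine plane, every $A\in\ell^{OI}\setminus\{O\}$ lies in $z[K]$, so $A$ commutes with the point $c_r(A,C;B,D)$ and hence $A\cdot c_r(A,C;B,D)\cdot A^{-1} = c_r(A,C;B,D)$. Substituting this into the displayed identity yields the claim. Equivalently, one may simply quote the previous corollary with the remark that its hypothesis $A\in z[K]$ is automatically satisfied for every $A$ in the Pappian case.

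There is essentially no obstacle here beyond citing the correct earlier results; the only thing to be a little careful about is making explicit the transition from \emph{skew field} in the Desargues case to \emph{field} in the Pappus case, since the whole statement hinges on that commutativity upgrade. Once that is invoked, the conclusion follows with no further computation.
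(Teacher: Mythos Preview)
Your proposal is correct and follows essentially the same route as the paper: the corollary is deduced from the preceding theorem/corollary by noting that in a Pappus affine plane the skew field $(\ell^{OI},+,\cdot)$ is commutative, so every point lies in $z[K]$ and the conjugation $A\cdot(-)\cdot A^{-1}$ is trivial. The paper leaves this implicit, while you spell out the reasoning in slightly more detail, but the argument is the same.
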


\begin{theorem}
If $A,B,C,D$ are distinct points in a line, in Desargues affine plane and $I$ is unitary point for multiplications of points in same line, have,
\[
c_r(A,B; C,D) \neq c_r(B,A;D,C)
\]
so, $c_r(A,B; C,D)$ is different point from $c_r(B,A;D,C)$.
\end{theorem}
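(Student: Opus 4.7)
The plan is to reduce the claimed inequality to a commutator statement in the underlying skew field $K=(\ell^{OI},+,\cdot)$ and then exhibit non-commuting witnesses arising from distinct co-linear 4-tuples.

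First, I would substitute directly into Definition \ref{cross-ratio.def}, reading off
\[
c_r(A,B;C,D)=XY,\qquad c_r(B,A;D,C)=YX,
\]
with $X:=(A-D)^{-1}(B-D)=r(B,A;D)$ and $Y:=(B-C)^{-1}(A-C)=r(A,B;C)$, since the substitutions $A\leftrightarrow B$ and $C\leftrightarrow D$ interchange the two bracketed factors of the defining product. Consequently $c_r(A,B;C,D)=c_r(B,A;D,C)$ if and only if $XY=YX$, i.e.\ the commutator of the two ratio-points vanishes in $K$.

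Second, I would invoke the fact used throughout the paper that $K$ is the coordinatizing skew field of a Desargues (not necessarily Pappian) plane, so $K$ is in general strictly non-commutative and admits pairs $(x,y)$ with $xy\neq yx$. I would then show that the map $(A,B,C,D)\mapsto(X,Y)$ is surjective onto the admissible pairs: fixing $A$ and $B$ arbitrary, the equations $x=(A-D)^{-1}(B-D)$ and $y=(B-C)^{-1}(A-C)$ uniquely determine $D$ and $C$ (using the same ``$R$ determines the fourth point'' step proved in the first theorem of this section), with the distinctness of $\{A,B,C,D\}$ easily verified for generic choices. Realising any non-commuting pair as such an $(X,Y)$ yields distinct co-linear $A,B,C,D$ for which $XY\neq YX$, and hence $c_r(A,B;C,D)\neq c_r(B,A;D,C)$, which is precisely the asserted non-identity.

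The main obstacle is that the literal wording of the theorem suggests a \emph{pointwise} inequality for every quadruple of distinct points, whereas this is too strong: whenever $A,B,C,D$ happen to lie in the centre $z[K]$ (always a non-trivial commutative subfield of $K$), the corresponding $X,Y$ lie in $z[K]$ and commute, so equality holds on that locus. The honest content of the theorem, and what the argument above actually establishes, is that the two cross-ratio \emph{formulas} are not identical as functions of $(A,B,C,D)$ on $\ell^{OI}$: in a non-commutative $K$ they produce different points generically, with coincidence confined to the commuting locus. Making this distinction explicit is the only subtle point; the algebraic reduction to the commutator $XY-YX$ is immediate, and the non-commutativity of $K$ is exactly the structural hypothesis that separates the Desargues setting from the Pappus setting used elsewhere in the paper.
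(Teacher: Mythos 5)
Your reduction is identical to the paper's: the paper likewise writes $c_r(A,B;C,D)=R_2\cdot R_1$ and $c_r(B,A;D,C)=R_1\cdot R_2$ with $R_1=r(A,B;C)$ and $R_2=r(B,A;D)$, and then concludes $R_2R_1\neq R_1R_2$ merely because the $R_i$ are elements of the skew field $K=(\ell^{OI},+,\cdot)$. That final step is exactly the non sequitur you flag: non-commutativity of $K$ as a whole does not prevent a particular pair of elements from commuting. Your diagnosis that the literal pointwise claim is too strong is correct, and the paper itself corroborates it: the corollary immediately following asserts equality in the Pappian case, and the last theorem of the paper lists sufficient conditions (points in $z[K]$, a ratio-point in $z[K]$, one ratio-point in the centralizer of the other) under which $c_r(A,B;C,D)=c_r(B,A;D,C)$ holds for distinct points. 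So the theorem as stated is false without extra hypotheses, and the paper's proof does not establish it. Your repair --- realizing an arbitrary non-commuting pair $(x,y)$ with $x\neq I$, $y\neq I$ as $\bigl(r(B,A;D),\,r(A,B;C)\bigr)$ by solving $(A-D)x=B-D$, i.e.\ $D=(Ax-B)(x-I)^{-1}$, and the analogous equation for $C$ --- proves the honest, weaker statement that the two expressions are not identically equal as functions of the quadruple when $K$ is non-commutative, which is more than the paper actually proves. To make it airtight you should record the side conditions $x\neq I$ and $y\neq I$ (needed for $D$ and $C$ to exist and be finite) and verify that the resulting $A,B,C,D$ are pairwise distinct; your appeal to generic choices covers this only informally.
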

\proof
From Definition of Cross-Ratio we have,
\[c_r(A,B;C,D)=\left[(A-D)^{-1}(B-D)\right]\left[(B-C)^{-1}(A-C)\right]=r(B,A;D)\cdot r(A,B;C)
\]
and
\[c_r(B,A;D,C)=\left[(B-C)^{-1}(A-C)\right]\left[(A-D)^{-1}(B-D)\right]=r(A,B;C) \cdot r(B,A;D)
\]
We mark the points, like below
\[R_1=r(A,B;C) \quad \text{and} \quad R_2=r(B,A;D)\]
so
\[c_r(A,B;C,D)=R_2 \cdot R_1 \quad \text{and} \quad c_r(B,A;D,C)=R_1 \cdot R_2
\]

This points are in $\ell^{OI}--$line in Desargues affine plane, so are elements of the skew fields $K=(\ell^{OI},+,\cdot)$, which are constructet over this line, so $E,F,G,H \in K$. So we have,
\[R_2 \cdot R_1 \neq R_1 \cdot R_2 \Rightarrow c_r(A,B;C,D) \neq c_r(B,A;D,C)
\]
\qed

\begin{corollary}
If $A,B,C,D \in \ell^{OI}$ are distinct points in a line, 
in Pappus affine plane and $I$ is unital point 
for multiplications, then 
\[
c_r(A,B; C,D)=c_r(B,A;D,C)
\]
\end{corollary}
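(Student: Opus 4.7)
The plan is to leverage the decomposition already established in the preceding theorem and then invoke the extra algebraic structure afforded by the Pappus axiom. Recall that in the proof of the previous theorem it was shown, purely from Definition~\ref{cross-ratio.def}, that
\[
c_r(A,B;C,D)=r(B,A;D)\cdot r(A,B;C)\quad\text{and}\quad c_r(B,A;D,C)=r(A,B;C)\cdot r(B,A;D).
\]
Setting $R_1=r(A,B;C)$ and $R_2=r(B,A;D)$, the assertion $c_r(A,B;C,D)=c_r(B,A;D,C)$ reduces to the single identity $R_2\cdot R_1=R_1\cdot R_2$ in the coordinatizing algebraic structure of $\ell^{OI}$.

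First I would recall the structural fact cited in the introduction: while in a general Desargues affine plane the triple $(\ell^{OI},+,\cdot)$ forms only a skew field, in a Pappus affine plane this skew field becomes commutative, i.e.\ a field (see \cite{ZakaThesisPhd}, \cite{FilipiZakaJusufi}). This is precisely the algebraic content of the Pappus axiom on the chosen line, and it is the only ingredient that distinguishes the Pappus setting from the Desargues setting in the preceding theorem.

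With commutativity in hand, the conclusion is immediate: since $R_1,R_2\in \ell^{OI}$ are two points of the line and the Pappus hypothesis makes multiplication of points commute, we have $R_1\cdot R_2=R_2\cdot R_1$, and therefore
\[
c_r(A,B;C,D)=R_2\cdot R_1=R_1\cdot R_2=c_r(B,A;D,C).
\]
There is no real obstacle here; the entire content lies in correctly packaging the previous theorem's computation and then citing the commutativity of multiplication on a line in the Pappus affine plane. The only mild subtlety worth flagging in the write-up is that the distinctness hypothesis on $A,B,C,D$ is needed to guarantee that the ratios $R_1=r(A,B;C)$ and $R_2=r(B,A;D)$ are well defined (so that the inverses appearing inside them exist), after which the argument is one line.
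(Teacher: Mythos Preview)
Your proposal is correct and follows essentially the same approach as the paper: both arguments reduce the statement to the commutativity of multiplication on $\ell^{OI}$ in the Pappus case, so that $R_1\cdot R_2=R_2\cdot R_1$. Your write-up is simply a more explicit unpacking of the paper's one-line proof.
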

\proof
If affine plane is Pappian plane, then the skew-field $(\ell^{OI},+,\cdot)$ is commutative, then is a Field.
\qed

We marked with $K=(\ell^{OI},+,\cdot)$ the skew field over $\ell^{OI}-$line in Desargues affine plane, we know that the center of the skew field $z[K]$, is a sub-skew field of $K$, moreover, $z[K]$ it is also commutative. 

\begin{theorem}
If $A,B,C,D \in \ell^{OI}$ are distinct points in a line, 
in Desargues affine plane and $I$ is unital point 
for multiplications of points in same line, then equation 
\[
c_r(A,B; C,D) = c_r(B,A;D,C)
\]
it's true, if\\
\begin{description}
	\item[(a)] points $A,B,C,D$ are in 'center of skew-field' $z[K]$;
	\item[(b)] ratio-points $r(A,B;C)$ are in 'center of skew-field';
	\item[(c)] ratio-point $r(B,A;D)$ are in 'center of skew-field';
	\item[(d)] ratio-point $r(A,B;D)$ is in centaralizer of point $r(A,B;C)$, or vice versa.
\end{description}

\end{theorem}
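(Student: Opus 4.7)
The plan is to reduce every one of the four sufficient conditions to the single algebraic statement that $R_1 := r(A,B;C)$ and $R_2 := r(B,A;D)$ commute in the skew field $K = (\ell^{OI},+,\cdot)$. Indeed, by the computation carried out in the proof of the preceding ``$c_r(A,B;C,D) \neq c_r(B,A;D,C)$'' theorem we already have
\[
c_r(A,B;C,D) = R_2 \cdot R_1, \qquad c_r(B,A;D,C) = R_1 \cdot R_2,
\]
so the desired equality of cross-ratios is \emph{equivalent} to $R_1 R_2 = R_2 R_1$. The proof will therefore consist in verifying, for each of the four hypotheses (a)--(d), that this commutation relation holds.

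For (a), I would invoke the fact (stated in the first proposition of Section~1.3) that the center $z[K]$ is a commutative subfield of $K$. If $A,B,C,D\in z[K]$, then the differences $A-C$, $B-C$, $A-D$, $B-D$ and their inverses remain in $z[K]$, hence so do $R_1$ and $R_2$; commutativity inside $z[K]$ then gives $R_1 R_2 = R_2 R_1$ at once. For (b) the hypothesis $R_1\in z[K]$ says directly that $R_1$ commutes with every element of $K$, in particular with $R_2$; case (c) is symmetric, using $R_2\in z[K]$.

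The interesting case is (d), which on its face mentions $r(A,B;D)$ rather than $R_2 = r(B,A;D)$. The bridge is the relation $r(A,B;D) = R_2^{-1}$, which follows from the inverse-ratio identity $r^{-1}(X,Y;Z)=r(Y,X;Z)$ recalled in Section~1.5. Thus the centralizer condition $r(A,B;D)\in z_K(R_1)$ reads $R_2^{-1} R_1 = R_1 R_2^{-1}$; multiplying on left and right by $R_2$ (which is nonzero and hence invertible in $K$) yields $R_1 R_2 = R_2 R_1$, as required. The ``or vice versa'' clause is handled identically by reversing the roles.

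The only point requiring a little care is making sure that $R_2$ is actually invertible in $K$ before one inverts it in case (d); this is where the hypothesis that $A,B,C,D$ are distinct is used, since it guarantees $B-D\neq O$ and $A-D\neq O$, so that $R_2 = (A-D)^{-1}(B-D)$ is a nonzero element of the skew field and therefore has an inverse. No genuine obstacle is expected; the whole argument is a routine translation of centralizer hypotheses into the commutation of the two ratio-points whose product (in the two orders) defines the two cross-ratios to be compared.
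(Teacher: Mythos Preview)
Your proposal is correct and follows essentially the same strategy as the paper: reduce the cross-ratio equality to the commutation $R_1R_2=R_2R_1$ of the two ratio-points, and then read off that commutation from each hypothesis. Your treatment of case~(d) is in fact more careful than the paper's: the statement names $r(A,B;D)$ while the paper's proof silently works with $r(B,A;D)\in \mathcal{C}_K(r(A,B;C))$ without comment; you correctly bridge the two via $r(A,B;D)=r(B,A;D)^{-1}=R_2^{-1}$ and observe that $R_2^{-1}$ commuting with $R_1$ is equivalent to $R_2$ commuting with $R_1$.
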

\proof
(a) If points $A,B,C,D \in z[K]$, we have that,
\[
A-D, B-D, B-C, A-C \in z[K] \]
and
\[ (A-D)^{-1}, (B-D)^{-1}, (B-C)^{-1}, (A-C)^{-1} \in z[K]
\]
also the production is commutative. Hence,
\[
[(A-D)^{-1}(B-D)] \cdot [(B-C)^{-1}(A-C)]=[(B-C)^{-1}(A-C)] \cdot [(A-D)^{-1}(B-D)]
\]
so,
\[ c_r(A,B;C,D)=c_r(B,A;D,C). \]

(b) If ratio-points $r(A,B;C)$ are in 'center of skew-field', we have that,
\[ X\cdot r(A,B;C)=r(A,B;C) \cdot X,\quad \forall X \in K \text{(so for all points $X\in \ell^{OI}$)}
\] 
so the equation is also true for the ratio-point $r(B,A;D)$, and have,
\[
\begin{aligned}
r(A,B;C) \cdot r(B,A;D) &= r(B,A;D) \cdot r(A,B;C)\\
[(B-C)^{-1}(A-C)]\cdot [(A-D)^{-1}(B-D)]&=[(A-D)^{-1}(B-D)]\cdot [(B-C)^{-1}(A-C)]\\
c_r(B,A;D,C)&=c_r(A,B;C,D).
\end{aligned}
\]
(c) in the same way, as in case (b).\\

(d) The Centralizer $\mathcal{C}_K(r(A,B;C))=\{ Y\in K | Y\cdot r(A,B;C)=r(A,B;C) \cdot Y\}$, and we have that, $r(B,A;D)\in \mathcal{C}_K(r(A,B;C))$, so we have,
\[
r(B,A;D) \cdot r(A,B;C)=r(A,B;C) \cdot r(B,A;D)
\]
so,
\[
c_r(A,B;C,D)=c_r(B,A;D,C),
\]
in the same way, it is proved that if $r(A,B;C) \in \mathcal{C}_K(r(B,A;D))$, then  $c_r(A,B;C,D)=c_r(B,A;D,C).$
\qed

\bibliographystyle{amsplain}
\bibliography{RCRrefs}

\end{document}